\newcommand{\id}{\mathrm{id}}
\newcommand{\Map}{\mathrm{Map}}
\newcommand{\pt}{\mathrm{pt}}
\DeclareMathOperator*{\colim}{\mathrm{colim}}
\definecolor{coloryellow}{RGB}{240,228,66}
\definecolor{colorskyblue}{RGB}{86,180,233}
\definecolor{colorvermillion}{RGB}{213,94,0}
\DeclareSymbolFont{sfletters}{OT1}{cmss}{m}{n}
\DeclareMathSymbol{\sTheta}{\mathord}{sfletters}{"02}
\theoremstyle{definition}
\newtheorem{definition}{Definition}[section]
\newtheorem{convention}[definition]{Convention}
\theoremstyle{plain}
\newtheorem{proposition}[definition]{Proposition}
\newtheorem{lemma}[definition]{Lemma}
\newtheorem{corollary}[definition]{Corollary}
\newtheorem{theorem}[definition]{Theorem}
\newtheorem*{bound}{Universal upper bound}
\theoremstyle{remark}
\newtheorem{remark}[definition]{Remark}
    \DeclareFontFamily{U}{wncy}{}
    \DeclareFontShape{U}{wncy}{m}{n}{<->wncyr10}{}
    \DeclareSymbolFont{mcy}{U}{wncy}{m}{n}
    \DeclareMathSymbol{\Sha}{\mathord}{mcy}{"58}
\newsavebox{\foobox}
\title{On the analog category of finite groups}
\author{Ben Knudsen and Shmuel Weinberger}
\begin{document}

\maketitle

\begin{abstract}
We show that the analog category of a finite group is essentially proportional to the size of its largest Sylow subgroup. We conclude that the universal upper bound given by the order of the group is very far from optimal.
\end{abstract}

\section{Introduction}

We continue the probabilistic reimagining of the foundations of topological robotics \cite{Farber:TCMP} begun simultaneously in \cite{KnudsenWeinberger:ACC} and \cite{DranishnikovJauhari:DTCLSC}, in which motion planning is conducted according to continuously varying probability measures on the relevant space of paths. The resulting ``analog'' invariants---which bound their classical counterparts, the Lusternik--Schnirelmann category and topological complexity, from below---display surprisingly subtle behavior. 

For example, the analog category of an aspherical space with torsion-free fundamental group is equal to the cohomological dimension of that group \cite[Thm. 1.1]{KnudsenWeinberger:ACC}, a direct analogue of the Eilenberg--Ganea theorem. Thus, in this case, analog category equals category. On the other hand, in the case of a finite fundamental group, the classical category is always infinite, while in our setting we have the following \cite[Thm. 7.2]{KnudsenWeinberger:ACC}---see Section \ref{section:analog category} for the definition of $\mathrm{acat}(G)$.

\begin{bound}
If $G$ is finite, then $\mathrm{acat}(G)+1\leq |G|$.\footnote{For stating our results, it is convenient to work with the ``unreduced'' category, which differs from the ``reduced'' convention by $1$. To avoid confusion, we will simply write $\mathrm{acat}(G)$ for the latter and $\mathrm{acat}(G)+1$ for the former. Both conventions are common throughout the literature on Lusternik--Schnirelmann category and topological complexity.}
\end{bound}

We show here that this bound is a very bad one in three senses: the groups for which it is sharp are highly constrained; the difference of the two sides is arbitrarily large; and their ratio is arbitrarily small. Provisionally, let us call $G$ $a$-\emph{special} if $|G|=ap^s$ with $p$ a prime, $(a,p)=1$, and $p^s>a$ (thus, a $1$-special group is simply a $p$-group).

\begin{theorem}\label{thm:failure}
In what follows, $G$ refers to a finite group.
\begin{enumerate}
\item If $G$ is not $a$-special for some $a\in \{1,2,3\}$, then the universal upper bound is strict for $G$.
\item If $G$ is $1$- or $2$-special, then the universal upper bound is sharp for $G$.\footnote{Unfortunately, we do not know whether the universal upper bound is sharp for $3$-special groups.}
\item For any $N\geq0$ and $\epsilon>0$, there exists a $G$ such that $\mathrm{acat}(G)+1\leq |G|-N$ and $\mathrm{acat}(G)+1\leq\epsilon |G|$.
\end{enumerate}
\end{theorem}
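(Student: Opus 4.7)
The plan is to combine a refined upper bound for non-$a$-special groups (yielding Parts (1) and (3)) with a matching lower bound in the $1$- and $2$-special cases (yielding Part (2)). The universal upper bound of \cite[Thm. 7.2]{KnudsenWeinberger:ACC} always gives $\mathrm{acat}(G) \leq |G| - 1$; the task is to show this is strict for non-$a$-special $G$ and sharp for $1$- and $2$-special $G$.

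For Part (1), I would first verify by case analysis that being non-$a$-special for any $a \in \{1,2,3\}$ forces the largest Sylow subgroup $G_p$ to have index $[G:G_p] \geq 4$: the residual cases with $[G:G_p] \leq 3$ all force $|G| \leq 9$ and are themselves $1$- or $2$-special. Given this, I aim to construct an analog motion planner on $BG$ of width strictly below $|G|-1$, by lifting a width-$(|G_p|-1)$ planner on $BG_p$ (provided by the universal upper bound applied to $G_p$) through the $[G:G_p]$-sheeted cover $BG_p \to BG$ and merging the lifts across sheets via a Hall-type partition-of-unity construction. This should yield $\mathrm{acat}(G) \leq |G_p| + [G:G_p] - 2$, which is strictly less than $|G|-1 = |G_p|\cdot[G:G_p]-1$ thanks to $(a-1)(|G_p|-1) \geq 3$ for $a = [G:G_p] \geq 4$ and $|G_p|\geq 2$. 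Crucially, the construction requires $[G:G_p] \geq 4$ and does not apply to $2$-special groups, explaining the asymmetry with Part (2).

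For Part (3), apply the quantitative bound of Part (1) to the family $G_k = \bZ/5 \times (\bZ/2)^k$ for $k \geq 3$. These groups are non-$a$-special for any $a \in \{1,2,3\}$; the largest Sylow has order $2^k$ and $[G_k:G_p] = 5$, so the bound gives $|G_k| - \mathrm{acat}(G_k) \geq 4\cdot 2^k - 3$, which eventually exceeds any prescribed $N$.

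For Part (2), the content is the matching lower bound $\mathrm{acat}(G) \geq |G|-1$. For a $p$-group, this should follow from a cohomological obstruction tailored to finite torsion groups (since classical cohomological dimension is infinite), adapted from the lower-bound technology of \cite[Thm. 1.1]{KnudsenWeinberger:ACC}. For a $2$-special $G$, with normal Sylow $p$-subgroup $N \triangleleft G$ of index $2$, one must amplify the $p$-group lower bound to $\mathrm{acat}(G) \geq 2|N|-1 = |G|-1$, via a $\bZ/2$-equivariant refinement. This doubling is the main technical obstacle of the proof, and the failure of a natural tripling explains the open status of the $3$-special case noted in the footnote.
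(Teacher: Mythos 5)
Your overall architecture---a strict upper bound for non-special groups driving Parts (1) and (3), and a matching lower bound for $1$- and $2$-special groups giving Part (2)---is the right one, and your case analysis reducing non-specialness to $[G:G_p]\geq 4$ matches what the paper does implicitly. But the quantitative upper bound you propose, $\mathrm{acat}(G)\leq |G_p|+[G:G_p]-2$, is false, and this sinks Parts (1) and (3) as written. Take $G=\bZ/5\times(\bZ/2)^k$, your own test family: the Sylow $2$-subgroup is normal and proper, hence not self-normalizing, and Proposition \ref{prop:p-sub} (proved via the Sullivan conjecture) gives $\mathrm{acat}(G)\geq 2\cdot 2^k-1$, which exceeds your claimed bound $2^k+3$ for every $k\geq 3$. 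More generally, an additive-in-the-number-of-sheets bound cannot hold; no mechanism is specified by which a planner on a finite cover descends at the cost of only $[G:G_p]-1$ extra rules, and the ``Hall-type partition of unity'' is not an argument. The paper's actual upper bound is multiplicative, $\mathrm{acat}(G)+1\leq 3|P|$ (Theorem \ref{thm:range}), obtained by a completely different route: one builds, prime by prime, a contractible $G$-CW complex $X(G)$ whose cells have isotropy in Sylow intersections of controlled depth (Smith theory \`a la Assadi--Oliver), and then maps it equivariantly into $\Delta_n^G$ by obstruction theory, using the identification $(\Delta_n^G)^H\cong\Delta^{G/H}_{\lfloor (n+1)/|H|\rfloor-1}$ to control connectivity of fixed-point sets. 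With that bound in hand, Parts (1) and (3) reduce to arithmetic: $3|P|<|G|$ once $[G:G_p]\geq 4$, and $\mathrm{acat}(C_{5p^s})<3p^s=|G|-2p^s$ handles Part (3).

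For Part (2) your skeleton is correct---a lower bound of $|P|-1$ for $p$-groups, doubled when the Sylow subgroup has index $2$, with the failure of ``tripling'' explaining the open $3$-special case---but the key input is missing and is not cohomological in the usual sense. The paper obtains it from Miller's theorem: for a $p$-subgroup $P$, the map $((\Delta_n^G)^P)^\wedge_p\to((\Delta_n^G)^\wedge_p)^{hP}$ is an equivalence, and $(\Delta_n^G)^P=\varnothing$ for $n<|P|-1$, which forces $(\Delta_n^G)^{hG}=\varnothing$ and hence $\mathrm{acat}(G)\geq|P|-1$ via Proposition \ref{prop:hfp}. For $|P|-1\leq n<2|P|-1$ the fixed set is the discrete set $G/P$, and applying $(-)^{hN(P)/P}$ kills it whenever $P$ is not self-normalizing---in particular when $[G:P]=2$, so that $P$ is normal. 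A cup-length or cohomological-dimension argument should not be expected to yield the sharp value $|G|-1$ here.
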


In particular, the universal upper bound is sharp for $p$-groups and strict for almost all other groups. For groups of the latter type, we show that the analog category is roughly proportional to the size of the largest Sylow subgroup.

\begin{theorem}\label{thm:range}
Let $G$ be a finite group not of prime power order. If $P\leq G$ is a Sylow subgroup of maximal order, then 
\[\min\left\{2, \textstyle{\frac{|N(P)|}{|P|}}\right\}\leq \frac{\mathrm{acat}(G)+1}{|P|}\leq 3,\]
where $N(P)$ denotes the normalizer of $P$ in $G$.
\end{theorem}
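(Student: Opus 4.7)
The plan is to prove the lower and upper bounds separately, using throughout the fact that $\mathrm{acat}(P)+1 = |P|$ for the Sylow $p$-subgroup $P$, which is the $1$-special case of Theorem~\ref{thm:failure}(2).

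For the upper bound $\mathrm{acat}(G)+1 \leq 3|P|$, the strategy is to promote an optimal analog cover of $BP$ of size $|P|$ to an analog cover of $BG$ of size at most $3|P|$. The key difficulty is that a naive pushforward along the $[G:P]$-fold cover $BP \to BG$ would incur a multiplicative loss of $[G:P]$, which can be arbitrarily large relative to $|P|$. The improvement to a loss factor of only $3$ should come from a Sylow-theoretic amortization: decomposing $G$ into three pieces determined by its interaction with $P$ and $N(P)$ (for example, along the stratification $P \subseteq N(P) \subseteq G$ of the coset data), and sharing the $|P|$ basic cover elements across these pieces rather than duplicating them coset by coset.

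For the lower bound, two cases must be handled. The baseline inequality $\mathrm{acat}(G)+1 \geq |P|$ should follow from a monotonicity-type principle applied to the inclusion $P \hookrightarrow G$, combined with $\mathrm{acat}(P)+1 = |P|$: any analog cover of $BG$ should restrict to an analog structure on $BP$ of no larger size. The stronger inequality $\mathrm{acat}(G)+1 \geq 2|P|$ under the assumption $|N(P)|/|P| \geq 2$ should be obtained by first restricting to $N(P)$, reducing to showing $\mathrm{acat}(N(P))+1 \geq 2|P|$; via the Schur--Zassenhaus decomposition $N(P) = P \rtimes Q$ with $|Q| \geq 2$ coprime to $p$, this in turn should be established by exhibiting a direct obstruction — perhaps cohomological, perhaps analog-cup-length-theoretic — to any analog cover of $BN(P)$ of size less than $2|P|$, ultimately coming from the nontrivial semidirect product structure.

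The main obstacle I expect is the upper bound: beating the naive $[G:P]$ loss requires a genuinely new amortization construction, and the sharp constant $3$ strongly suggests that this construction is tightly choreographed with respect to $P$ and $N(P)$. The normalizer case of the lower bound is also likely to be delicate, as it asserts a clean two-fold jump that must survive whatever combinatorial slack is present in the definition of $\mathrm{acat}$.
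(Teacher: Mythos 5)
Your proposal identifies the right targets but is missing the mechanisms that actually prove them, and in one place it is circular relative to the paper's logic. For the lower bound, you want to input $\mathrm{acat}(P)+1=|P|$ as ``the $1$-special case of Theorem \ref{thm:failure}(2),'' but in the paper that statement is a \emph{consequence} of the lower bound you are trying to prove (via Proposition \ref{prop:p-sub}), not an available ingredient; the genuine content is the inequality $\mathrm{acat}(G)\geq |P|-1$ for a $p$-subgroup $P$, and the paper's proof of it passes through the homotopy-fixed-point description $\mathrm{acat}(G)=\min\{n\mid (\Delta_n^G)^{hG}\neq\varnothing\}$ (Proposition \ref{prop:hfp}), the explicit computation $(\Delta_n^G)^H\cong \Delta^{G/H}_{\lfloor (n+1)/|H|\rfloor-1}$, and the Sullivan conjecture to convert homotopy fixed points into honest fixed points. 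None of this appears in your plan. Your ``monotonicity-type principle'' is in fact true (restriction to $H$-homotopy fixed points followed by the $H$-equivariant fold map $\Delta_n^G\to\Delta_n^H$), but it only reduces the problem to the $p$-group case, which is where the real work lies. For the refined bound when $P$ is not self-normalizing, your appeal to Schur--Zassenhaus and an unspecified ``cohomological or cup-length'' obstruction has no substance to check; the paper instead observes that for $|P|-1\leq n<2|P|-1$ the fixed set $(\Delta_n^G)^P$ is the discrete set $G/P$, whose $N(P)/P$-fixed points are empty precisely when $P$ is not self-normalizing, so $(\Delta_n^G)^{hN(P)}=\varnothing$.

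The upper bound is where your approach would fail outright as described. There is no ``analog cover of $BG$'' construction to amortize; the paper's route is to build, one prime at a time and by induction on the depth of $p$-Sylow intersections, a contractible $G$-CW complex $X(G)$ whose cells have controlled isotropy and dimension (Smith theory in the style of Assadi/Oliver, Theorem \ref{thm:one prime}), and then to produce a $G$-map $X(G)\to\Delta_n^G$ by equivariant obstruction theory, which yields $\mathrm{acat}(G)\leq n$ by Corollary \ref{cor:contractible}. The constant $3$ does not come from a three-piece coset decomposition of $G$; it comes from the connectivity estimate $\mathrm{conn}\,(\Delta_n^G)^H=\lfloor (n+1)/|H|\rfloor-2$, which at the bottom of the induction requires $(\Delta_n^G)^P$ to be simply connected for $P$ Sylow, forcing $n+1\geq 3|P|$, together with the elementary inequality $1+d/3\leq p^d$ at higher depths. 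Without the designer complex and the fixed-point connectivity computation, there is no way to run your outline, so the proposal as it stands has a genuine gap on both sides of the theorem.
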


In more prosaic terms, the lower bound is $1$ when the largest Sylow subgroup of $G$ is self-normalizing\footnote{As shown in \cite{GuralnickMalleNavarro:SNSG}, the admission of a self-normalizing Sylow subgroup places strong constraints on a group.}, and otherwise it is $2$; for example, the lower bound of $2$ obtains for all nilpotent groups. 

Prior to our work here, the quantity $\mathrm{acat}(G)$ was almost completely unknown apart from the universal upper bound and a calculation for cyclic groups of prime order \cite{Dranishnikov:DTCG}. Strictly speaking, this last calculation was of an a priori different invariant, the distributional category; we show here that the two coincide for finite groups, a special case of \cite[Conj. 1.2]{KnudsenWeinberger:ACC}.

\begin{theorem}\label{thm:comparison}
For any finite group $G$, the analog and distributional category of $G$ coincide.
\end{theorem}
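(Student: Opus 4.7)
The plan is to prove both inequalities $\mathrm{acat}(G)\leq \mathrm{dcat}(G)$ and $\mathrm{dcat}(G)\leq \mathrm{acat}(G)$. One of these---almost certainly $\mathrm{dcat}(G)\leq \mathrm{acat}(G)$, since the analog category refines the distributional one by imposing additional regularity on the parameterizing family of probability measures---should be immediate from the definitions. What requires argument is the reverse inequality, which in general is the content of \cite[Conj. 1.2]{KnudsenWeinberger:ACC}, so the real task is to exploit finiteness of $G$.

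The central simplification available to us in the finite case is that the relevant space of paths on $BG$ decomposes as a disjoint union of components indexed by $\pi_1(BG)=G$. Fix a model for $BG$ with contractible universal cover $EG$ on which $G$ acts freely. Any probability measure $\mu$ on the path space admits a canonical convex decomposition $\mu = \sum_{g\in G}\lambda_g\,\mu^g$, where $\mu^g$ is supported on the component labeled by $g$. Since $G$ is finite these components are clopen, so the weights $\lambda_g$ depend continuously on $\mu$ in \emph{any} reasonable topology on the space of measures. My strategy is to take a distributional motion planner realizing $\mathrm{dcat}(G)=k$, push it through this decomposition on each open set of the cover, and then lift each summand to $EG$ via the free $G$-action, rebuilding the result as a family of measures admissible for the analog definition.

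The hard part will be reconciling the two notions of continuity for measure-valued maps: the distributional definition typically asks only for weak-$\ast$ continuity, while the analog definition demands a stronger form of regularity (such as continuity in total variation or a support-control condition). The decomposition above reduces the problem to a single component of the path space, where the lift to $EG$ is contractible, so one expects a genuine deterministic section to exist and, after partition-of-unity adjustment subordinate to the distributional open cover, to yield the required analog data without any increase in the number of pieces (finiteness of $G$ is what prevents combinatorial blow-up at this stage). The technical crux is to verify that this lifting-and-regularization procedure truly meets the analog category's precise admissibility criterion on each open set; I expect this to reduce to an interpolation argument between a weak-$\ast$ continuous family and a genuinely continuous (e.g. atomic) family on a contractible space, performed equivariantly under the finite action of $G$.
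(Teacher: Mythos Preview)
Your proposal rests on a misreading of the two definitions. Neither the analog nor the distributional category is defined via an open cover or a partition of unity; both are the least $n$ for which the map $\mathcal{P}_{n+1}(f)\to Y$ admits a \emph{single global section}, where $f$ is the based path fibration. The only difference between the two invariants is the topology placed on $\mathcal{P}_{n+1}$: the analog version uses the quotient topology inherited from the maps $X^k\times\Delta^{k-1}\to\mathcal{P}(X)$, while the distributional version uses the L\'{e}vy--Prokhorov metric. There is no ``distributional open cover'' to be subordinate to, and no dichotomy between weak-$\ast$ and total-variation continuity---the question is purely whether a section continuous for the coarser (L\'{e}vy--Prokhorov) topology is automatically continuous for the finer (quotient) one.

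Your componentwise decomposition and ``deterministic section'' idea also breaks down on its own terms. A continuous choice, for each $x\in BG$, of a path from $x_0$ to $x$ lying in the component labeled by a fixed $g\neq e$ is precisely a section of the covering $EG\to BG$, which does not exist. So you cannot replace the measure on a nontrivial component by a continuously varying Dirac measure. The paper instead passes to the homotopy-equivalent fibration $\pi:EG\to BG$, whose fibers are the finite discrete set $G$, and observes that $\pi$ is \emph{proper}. The key lemma is then point-set topological: for compact metric $X$ the identity $\mathcal{P}_n(X)\to\mathcal{P}_n(X)_{\mathrm{LP}}$ is a homeomorphism (continuous bijection from compact to Hausdorff). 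Since $BG$ is compactly generated, one restricts to compact $K\subseteq BG$, where $\pi^{-1}(K)$ is compact by properness, and the two topologies on $\mathcal{P}_{n+1}(\pi|_{\pi^{-1}(K)})$ agree. Finiteness of $G$ enters only through properness of $\pi$, not through any combinatorial reassembly.
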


In fact, the same argument may be used to show that the analog and distributional versions of the $r$th sequential topological complexity $\mathrm{TC}_r$ coincide for every $r$.

\subsection{Conventions}
We write $\Delta^S$ for the simplex spanned by the set $S$. Thus, a point in $\Delta^S$ is a formal sum $\sum_{s\in S} t_ss$, where the \emph{barycentric coordinates} $t_s$ are non-negative numbers summing to $1$, all but finitely many of which vanish. In the case $S=\{1,\ldots, n\}$, we make the abbreviation $\Delta^{\{1,\ldots, n\}}=\Delta^{n-1}$; explicitly, this choice amounts to the following slightly non-standard notational convention:
\[\Delta^{n-1}:=\left\{(t_1,\ldots, t_n)\in[0,1]^n:\sum_{i=1}^nt_i=1\right\}.\] We work in Steenrod's convenient category of topological spaces \cite{Steenrod:CCTS}---see \cite[Appendix A]{KnudsenWeinberger:ACC} for a summary of relevant facts about these spaces. Topological spaces are implicitly convenient, as are limits, including products, and mapping spaces. Convenient colimits, when they exist, are the same as ordinary colimits. The adjective ``compact'' refers to the definition in terms of open covers. We write $BG$ for the classifying space of the (discrete) group $G$, i.e., the geometric realization of its nerve. We write $EG$ for the universal cover of $BG$ and $X^{hG}=\Map^G(EG, X)$ for the space of homotopy fixed points of the $G$-space $X$, where $\mathrm{Map}^G$ denotes the space of $G$-equivariant maps.

\section{The analog category of a group}\label{section:analog category}

The purpose of this section is to establish the following formula, which the reader may take as a definition of the analog category $\mathrm{acat}(G)$. We recall that $\Delta^G$ denotes the simplex spanned by the elements of $G$, and $\Delta^G_n$ is its $n$-skeleton.

\begin{proposition}\label{prop:hfp}
For any group $G$, we have
$\mathrm{acat}(G)=\min\{n\mid \left(\Delta_n^G\right)^{hG}\neq \varnothing\}.$
\end{proposition}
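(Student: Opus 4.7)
The plan is to unpack the definition of $\mathrm{acat}(G)=\mathrm{acat}(BG)$ given in \cite{KnudsenWeinberger:ACC} and then translate it via the standard section/equivariant-map correspondence over the universal cover. First I would recall that $\mathrm{acat}(BG)\leq n$ amounts to the existence of a continuously varying family of probability measures on based paths in $BG$, parametrized by the endpoint $x\in BG$ and supported on at most $n+1$ paths. Lifting endpoints to $EG$, each based path class is uniquely represented by a deck transformation, so the fiber of paths at $x$ is identified $G$-equivariantly with $G$, and the fiberwise space of such probability measures is identified with the $G$-space $\Delta_n^G$ of probability measures on $G$ with support of size at most $n+1$.

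Under this identification, the bundle of fiberwise measures over $BG$ is the associated bundle $EG\times_G \Delta_n^G$, and a continuous ``analog motion planner'' of complexity $\leq n$ is exactly a continuous section of this bundle. By the standard bijection between sections of a bundle associated to a principal $G$-bundle and $G$-equivariant maps out of the total space, such a section exists if and only if there is a $G$-equivariant continuous map $EG\to \Delta_n^G$. Invoking the identity $X^{hG}=\Map^G(EG,X)$ recorded in the Conventions subsection with $X=\Delta_n^G$, this is exactly the statement that $(\Delta_n^G)^{hG}\neq\varnothing$; taking the minimum over $n$ gives the formula.

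The one substantive check is bookkeeping: one must verify that the topology used on the space of measures in \cite{KnudsenWeinberger:ACC}, together with its continuity requirement for varying families, lines up with the topology on $\Delta_n^G$ for which $\Map^G(EG, \Delta_n^G)$ is the correct mapping space. This is essentially how $\Delta_n^G$ is built, so the remaining work should be unfolding notation rather than a new argument. The principal conceptual point---already built into the homotopy-fixed-point formalism for free $G$-CW resolutions of a point---is that sections of twisted bundles with fiber $F$ over $BG$ correspond to $G$-equivariant maps $EG\to F$, i.e., to points of $F^{hG}$.
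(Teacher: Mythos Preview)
Your proposal is correct and follows essentially the same route as the paper: replace the based path fibration by the fiber homotopy equivalent covering map $\pi:EG\to BG$, identify $\mathcal{P}_{n+1}(\pi)$ with the associated bundle $EG\times_G\Delta_n^G$, and then use the sections/homotopy-fixed-points correspondence. The paper packages the two ``bookkeeping'' steps you flag as citations to \cite[Cor.~5.3, Cor.~5.8]{KnudsenWeinberger:ACC} (fiber-homotopy invariance of analog sectional category and the associated-bundle description of $\mathcal{P}_{n+1}$ for a principal $G$-bundle) and records the section-space identification as a separate lemma, but nothing in your outline differs from this.
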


We begin with a brief review of the invariants of \cite{KnudsenWeinberger:ACC}, which are defined in terms of the set $\mathcal{P}(X)$ of probability measures with finite support on the topological space $X$. We view $\mathcal{P}(X)$ as a topological space with the quotient topology inherited from the various maps
\begin{align*}
X^n\times \Delta^{n-1}&\longrightarrow\mathcal{P}(X)\\
(x,t)&\mapsto \sum_{i=1}^n t_i \delta_{x_i}.
\end{align*} 
We write $\mathcal{P}_n(X)\subseteq \mathcal{P}(X)$ for the subspace of measures with support of cardinailty at most $n$. 

Classically, spaces of probability measures are often topologized using the L\'{e}vy--Prokhorov metric, which metrizes the topology of weak convergence when the background space $X$ is a separable metric space. We direct the reader to Section \ref{section:comparison} below for some comparisons between the two approaches. The twin advantages of ours are its generality, as we do not even require the background space to be metrizable, and its excellent technical features, which are summarized in the following result.

\begin{theorem}[{\cite[Thm. 2.7]{KnudsenWeinberger:ACC}}]\label{thm:point set}
The functor $\mathcal{P}$ is an endofunctor on the category of convenient spaces, which preserves homotopy, sifted colimits, quotient maps, and closed embeddings.
\end{theorem}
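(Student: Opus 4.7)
The plan is to exploit the quotient presentation $\mathcal{P}(X) = \colim_n \mathcal{P}_n(X)$, where each $\mathcal{P}_n(X)$ is obtained from $X^n \times \Delta^{n-1}$ by quotienting by the equivalence relation identifying two weighted tuples whenever they represent the same measure. This relation is generated by the diagonal $\Sigma_n$-action permuting the pairs $(x_i, t_i)$ together with face identifications deleting coordinates whose weight vanishes; in particular each $\mathcal{P}_n(X)$ is itself a (finite) colimit. The bulk of the work is to transport properties of the formation $X \mapsto X^n \times \Delta^{n-1}$ through this quotient, and then to pass to $\colim_n$.

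The endofunctor claim is essentially formal: Steenrod's convenient category is closed under finite products and quotients, so each $\mathcal{P}_n(X)$ is convenient, and so is $\mathcal{P}(X)$. Homotopy preservation I would obtain by constructing a natural continuous map $\mathcal{P}(X) \times I \to \mathcal{P}(X \times I)$ sending $(\mu, s) \mapsto \mu \otimes \delta_s$, whose continuity is immediate from the quotient description; composing with $\mathcal{P}(H)$ for a homotopy $H \colon X \times I \to Y$ produces the required homotopy between $\mathcal{P}(f)$ and $\mathcal{P}(g)$. Sifted colimits fall out from the fact that sifted colimits commute with finite products in the convenient category: if $X \cong \colim_\alpha X_\alpha$ is sifted, then $X^n \times \Delta^{n-1} \cong \colim_\alpha X_\alpha^n \times \Delta^{n-1}$, and the remaining operations (symmetrization, face collapse, filtered colimit over $n$) are themselves colimits, hence commute with the sifted colimit.

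For quotient maps and closed embeddings the strategy is the same: verify that both classes are stable under the formation $X \mapsto X^n \times \Delta^{n-1}$ (which reduces to stability under finite products, exploiting that $\Delta^{n-1}$ is locally compact Hausdorff), then under the specific quotient defining $\mathcal{P}_n$, then under $\colim_n$. For quotient maps, the middle step is formal, since a composition of quotient maps is a quotient map. For closed embeddings, the image $\mathcal{P}_n(X) \subseteq \mathcal{P}_n(Y)$ consists of measures with support in $X$, and this matches the image of the closed, saturated subset $X^n \times \Delta^{n-1} \subseteq Y^n \times \Delta^{n-1}$ under the quotient map; closedness of the image and injectivity of $\mathcal{P}_n(X) \to \mathcal{P}_n(Y)$ follow.

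The main obstacle will be showing that this latter map is actually a topological embedding, not merely a continuous injection with closed image. Concretely, one must verify that the quotient topology on $\mathcal{P}_n(X)$ coincides with the subspace topology inherited from $\mathcal{P}_n(Y)$, which requires the equivalence relation defining $\mathcal{P}_n(Y)$ to restrict to the one defining $\mathcal{P}_n(X)$ in a strong enough sense to commute with passage to the quotient. This is where the explicit form of the relation (governed by permutations and zero-weight faces, both structurally inherited from the ambient product) and the closedness of $X \hookrightarrow Y$ are both indispensable; the analogous statement for an arbitrary subspace inclusion would fail.
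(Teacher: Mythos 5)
First, a point of order: this statement is not proved in the present paper at all---it is quoted verbatim from \cite[Thm.~2.7]{KnudsenWeinberger:ACC}, so there is no in-paper argument to compare against. Your overall strategy (build $\mathcal{P}(X)$ functorially as a colimit of quotients of the spaces $X^n\times\Delta^{n-1}$, verify each preservation property at the level of $X^n\times\Delta^{n-1}$ using cartesian closedness of the convenient category, and push it through the quotient and the colimit over $n$) is the natural one and is almost certainly the shape of the argument in the cited source. The endofunctor, homotopy, sifted-colimit, and quotient-map parts of your sketch are essentially correct as outlined.

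Two concrete soft spots. First, your generating set for the equivalence relation is incomplete: besides permutations and deletion of zero-weight coordinates, you must also merge repeated support points, since $(x,x,(t_1,t_2))$ and $(x,(t_1+t_2))$ define the same measure but are not related by any permutation or zero-weight face when $t_1,t_2>0$. This omission propagates into the sifted-colimit argument (where you need the relation to be presented by functorial, colimit-compatible generators) and into your analysis of the embedding. Second, and more seriously, the subset $X^n\times\Delta^{n-1}\subseteq Y^n\times\Delta^{n-1}$ is \emph{not} saturated: the point $(x,y,(1,0))$ with $y\in Y\setminus X$ is identified with $(x,x,(1,0))$. The saturation is the closed set of tuples whose positively-weighted coordinates lie in $X$, and while restricting the quotient map to this saturated closed set does exhibit $\mathcal{P}_n(X)\subseteq\mathcal{P}_n(Y)$ as a closed subspace carrying the quotient topology \emph{from the saturation}, you still owe an argument that this agrees with the quotient topology from $X^n\times\Delta^{n-1}$ itself (the continuous bijection between the two quotients is not automatically a homeomorphism, and there is no obvious continuous retraction of the saturation onto $X^n\times\Delta^{n-1}$). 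You correctly flag the embedding claim as the main obstacle, but as written the proposal does not close it; this is precisely the step that makes the closed-embedding clause a theorem rather than a formality.
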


Given a map $f:X\to Y$, we may consider the space of probability measures on $X$ with fiberwise support over $Y$, namely 
\[\mathcal{P}(f)=\left\{\sum_{i=1}^nt_i\delta_{x_i}\in\mathcal{P}(X) : f(x_1)=f(x_2)=\cdots=f(x_n)\right\},\] and we set $\mathcal{P}_n(f)=\mathcal{P}_n(X)\cap \mathcal{P}(f)$. Sending an element of $\mathcal{P}(f)$ to the point in whose fiber it is supported defines a continuous map to $Y$.

\begin{definition}\label{def:analog}
The \emph{analog sectional category} of the map $f:X\to Y$ is the least $n$ such that $\mathcal{P}_{n+1}(f)\to Y$ admits a section. The \emph{analog category} of the space $X$, denoted $\mathrm{acat}(X)$, is the analog sectional category of the evaluation map $(X,x_0)^{([0,1],\{0\})}\to X$, where $x_0\in X$ is any basepoint.
\end{definition}

As our interest here lies solely in the aspherical context, we permit ourselves the abusive abbreviation $\mathrm{acat}(G)=\mathrm{acat}(BG)$.

For the proof of the proposition stated above, we require the following standard fact.

\begin{lemma}\label{lem:section space}
For any $G$-space $X$, the homotopy fixed point space $X^{hG}$ is canonically weakly equivalent to the space of sections of the canonical map $EG\times_G X\to BG$.
\end{lemma}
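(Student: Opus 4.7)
The plan is to exhibit a canonical homeomorphism between $X^{hG}=\Map^G(EG,X)$ and the space $\Gamma$ of sections of $p\colon EG\times_G X\to BG$, which in particular is a weak equivalence. The key observation is that the commutative square with $EG\times X\to EG$ the first projection, $p\colon EG\times_G X\to BG$, and the two canonical quotient maps $EG\times X\to EG\times_G X$ and $\pi\colon EG\to BG$ as its horizontal arrows is a pullback; here the $G$-action on $EG\times X$ is diagonal, and $\pi$ is the universal principal $G$-bundle.

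First I would define $\Phi\colon \Map^G(EG,X)\to\Gamma$ by $\Phi(f)([e])=[e,f(e)]$; the $G$-equivariance of $f$ makes this well-defined on $G$-orbits. For the inverse, given $s\in\Gamma$, the universal property of the pullback applied to the pair $(\id_{EG},s\circ\pi)$ produces a unique lift $\tilde s\colon EG\to EG\times X$ covering $\id_{EG}$, and writing $\tilde s(e)=(e,\Psi(s)(e))$ defines a map $\Psi(s)\colon EG\to X$. Uniqueness of the lift, combined with the fact that $s\circ\pi$ is $G$-invariant because it factors through $BG$, forces $\tilde s$ and hence $\Psi(s)$ to be $G$-equivariant. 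A direct check then shows $\Phi$ and $\Psi$ are mutually inverse.

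Continuity of $\Phi$ follows from the universal properties of the two quotient maps in the pullback square, and continuity of $\Psi$ follows from the universal property of the pullback combined with the exponential adjunction for convenient mapping spaces. The main obstacle is purely technical bookkeeping with topologies in the convenient category, namely verifying that the subspace topology on $\Gamma\subseteq\Map(BG,EG\times_G X)$ corresponds under $\Phi$ to the subspace topology on $\Map^G(EG,X)\subseteq\Map(EG,X)$. This is a standard fact in equivariant homotopy theory---essentially the adjunction between the Borel construction and the forgetful functor from $G$-spaces to spaces over $BG$---and I would invoke it rather than grind out the verification by hand.
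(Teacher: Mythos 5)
Your argument is correct and proves something slightly stronger than the lemma, by a genuinely different route. You construct the classical point-set correspondence between sections of the associated bundle $EG\times_G X\to BG$ and equivariant maps $EG\to X$, upgraded to a homeomorphism of mapping spaces; the essential input is that $EG\times_{BG}(EG\times_G X)\cong EG\times X$, which holds because $EG\to BG$ is a principal $G$-bundle (for $G$ discrete, a covering space), so the translation function is continuous. The paper instead argues entirely homotopy-theoretically: it applies $\Map^G(EG,-)$ to the same square you use, identifies $X^{hG}$ and the section space as the fibers of the two resulting vertical fibrations over $\id_{EG}$ and $\id_{BG}$ respectively, and concludes from the left-hand square being a homotopy pullback that these fibers are weakly equivalent. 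Your approach buys an actual homeomorphism rather than a weak equivalence, at the price of the point-set bookkeeping you flag at the end---which does go through in the convenient category, since $\Map(EG,-)$ preserves pullbacks and $\Map(-,Z)$ carries the quotient $EG\to BG$ to an embedding onto the $G$-invariant maps; the paper's argument is softer and sidesteps these verifications entirely, which is all that is needed since only a weak equivalence is claimed.
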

\begin{proof} Consider the following commutative diagram of mapping spaces: 
\[\xymatrix{
\Map^G(EG,EG\times X)\ar[r]\ar[d]&\Map^G(EG,EG\times_G X)\ar[d]\ar@{=}[r]^-\sim&\Map(BG, EG\times_GX)\ar[d]\\
\Map^G(EG,EG)\ar[r]&\Map^G(EG,BG)\ar@{=}[r]^-\sim&\Map(BG,BG)
}\] The section space in question is the fiber of the righthand vertical map over $\id_{BG}$, while $X^{hG}$ is the fiber of the lefthand vertical map over $\id_{EG}$. The claim follows after noting that the vertical maps are fibrations and the lefthand square a homotopy pullback.
\end{proof}

\begin{proof}[Proof of Proposition \ref{prop:hfp}]
It is well-known that, for any group $G$, there is a commutative diagram of topological spaces
\[\xymatrix{
G\ar[d]\ar[r]&(BG,x_0)^{([0,1],\{0,1\})}\ar[d]\\
EG\ar[d]_-\pi\ar[r]&(BG,x_0)^{([0,1],\{0\})}\ar[d]\\
BG\ar@{=}[r]&BG
}\] in which the horizontal arrows are homotopy equivalences and the vertical columns are (Hurewicz) fiber sequences. It follows from \cite[Cor. 5.3]{KnudsenWeinberger:ACC} that $\mathrm{acat}(G)$ is the analog sectional category of $\pi$, which is a fiber bundle with structure group $G$; therefore, by \cite[Cor. 5.8]{KnudsenWeinberger:ACC}, we have \[\mathcal{P}_{n+1}(\pi)\cong EG\times_G\mathcal{P}_{n+1}(G)\] as spaces over $BG$, and it is easy to see that $\mathcal{P}_{n+1}(G)\cong\Delta^G_n$ as $G$-spaces, so the claim follows from Lemma \ref{lem:section space}.
\end{proof}

\begin{corollary}\label{cor:contractible}
If $X$ is a contractible $G$-space with an equivariant map $X\to \Delta^G_n$, then $\mathrm{acat}(G)\leq n$.
\end{corollary}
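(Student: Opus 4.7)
The plan is to invoke Proposition~\ref{prop:hfp} directly: it suffices to exhibit a nonempty element of $(\Delta^G_n)^{hG}$, i.e., an equivariant map $EG\to\Delta^G_n$. Given the hypothesized equivariant map $\varphi\colon X\to \Delta^G_n$, it is enough to produce an equivariant map $EG\to X$ and postcompose with $\varphi$.

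To produce an equivariant map $EG\to X$ I would use that $X$ is contractible. The cleanest route is via Lemma~\ref{lem:section space}: that lemma identifies $X^{hG}$ with the space of sections of the Borel fibration $EG\times_G X\to BG$. Since $X$ is contractible, the fibers of this fibration are contractible, so it is a weak equivalence; a section then exists by ordinary obstruction theory over the CW complex $BG$, the obstructions to extending a section cell by cell landing in the vanishing homotopy groups of the fiber. Alternatively, one can argue directly: build the equivariant map on a free $G$-CW model of $EG$ skeleton by skeleton, and at each stage the extension problem across a free cell $G\times D^k$ reduces to extending a map $S^{k-1}\to X$ across $D^k$, which is possible because $\pi_{k-1}(X)=0$.

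Composing this equivariant map $EG\to X$ with the given equivariant $\varphi$ yields an equivariant map $EG\to \Delta^G_n$, hence $(\Delta^G_n)^{hG}\neq\varnothing$, and Proposition~\ref{prop:hfp} gives $\mathrm{acat}(G)\leq n$.

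There is no substantial obstacle here; the only content is the standard fact that a contractible $G$-space receives an equivariant map from $EG$, which is essentially a rephrasing of the universal property of $EG$ among free $G$-CW complexes together with obstruction theory against a contractible target.
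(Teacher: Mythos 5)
Your proposal is correct and follows exactly the same route as the paper: produce an equivariant map $EG\to X$ from contractibility, compose with the given map to conclude $(\Delta^G_n)^{hG}\neq\varnothing$, and apply Proposition~\ref{prop:hfp}. The paper simply asserts the existence of the equivariant map $EG\to X$ where you spell out the standard obstruction-theoretic justification.
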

\begin{proof}
Contractibility implies that $X^{hG}\neq \varnothing$, since an equivariant map $EG\to X$ may be constructed by elementary obstruction theory; alternatively, the map $EG\times_G X\to BG$ is a trivial fibration, since $EG\to \mathrm{pt}$ is a homotopy equivalence, and Lemma \ref{lem:section space} applies. It follows that $(\Delta^G_n)^{hG}$ receives a map from a non-empty space, hence is itself non-empty. The claim follows from Proposition \ref{prop:hfp}.
\end{proof}

\begin{remark}
Essentially the same argument shows that the $r$th analog topological complexity of $BG$, as defined in \cite{KnudsenWeinberger:ACC}, is equal to the least $n$ such that the $G^r$-space $\Delta_n^{G^r/G}$ admits a homotopy fixed point.
\end{remark}

\section{Designer complexes}
This section is concerned with the construction of certain contractible equivariant cell complexes, which, via Corollary \ref{cor:contractible} and obstruction theory, will be the key to proving our main results. The ideas here are mostly taken from the work of Assadi \cite{Assadi:FGASCMCWC}, following Oliver, Conner--Floyd, Smith, and others, but the specificity of our situation permits some simplification and hence a relatively self-contained account.

In what follows, the group $G$ is always finite. As a matter of terminology, we say that a space is $p$-acyclic if its mod $p$ reduced homology vanishes.

\begin{definition}
Let $X$ be a $G$-complex. Given a prime $p$, we say that $X$ is \emph{Smith $p$-acyclic} if $X^P$ is $p$-acyclic for every nontrivial $p$-subgroup $P$. We say that $X$ is \emph{Smith acyclic} if $X$ is Smith $p$-acyclic for every prime $p$.
\end{definition}

The relevance of this definition lies in its connection to obstruction theory.

\begin{proposition}\label{prop:smith projective}
Let $X$ be a $G$-complex of dimension $m$.
\begin{enumerate}
\item If $X$ is Smith ($p$-)acyclic, then a ($p$-)acyclic $G$-complex may be obtained from $X$ by attaching free cells of dimension at most $m+1$.
\item If $X$ is acyclic, then a contractible $G$-complex may be obtained by attaching free cells of dimension at most $3$.
\end{enumerate}
\end{proposition}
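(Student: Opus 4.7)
The plan for Part (1) is equivariant obstruction theory with a Smith-theoretic input. The critical algebraic fact I would establish first is that Smith $p$-acyclicity forces each reduced mod-$p$ homology module $\tilde{H}_k(X;\mathbb{F}_p)$ to be cohomologically trivial as an $\mathbb{F}_p[G]$-module, and hence projective, since $\mathbb{F}_p[G]$ is self-injective. To prove this, I would restrict to a Sylow $p$-subgroup $P$ and use that $X^{P'}$ is $p$-acyclic for every nontrivial $p$-subgroup $P'\leq P$; a filtration/spectral sequence argument shows that the non-fixed strata have free $P$-action and hence projective $\mathbb{F}_p[P]$-chain modules, while the fixed-point contributions are acyclic by hypothesis, yielding global projectivity of $\tilde{H}_*(X;\mathbb{F}_p)$ over $\mathbb{F}_p[G]$.

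Geometrically, I would then realize a chain-level free resolution by attaching free $G$-cells. Since each $\tilde{H}_k$ is projective over $\mathbb{F}_p[G]$, it is a summand of a finitely generated free module, and a Schanuel-style argument converts projective summands into free ones at the cost of adding auxiliary free cells in adjacent lower degrees. Working inductively from the top dimension $m$, I attach free $(m+1)$-cell orbits $G\times D^{m+1}$ with cellular boundaries realizing a free surjection onto $\tilde{H}_m(X;\mathbb{F}_p)$; the resulting projective kernel in degree $m+1$ is absorbed into the next lower-dimensional free attachment, and so on down to dimension $1$. The main obstacle is the dimensional bookkeeping: ensuring that the iterated Schanuel adjustments close up at dimension $m+1$ rather than spilling over into dimension $m+2$; this is where the full force of projectivity of every $\tilde{H}_k$, not just the top one, is used.

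For Part (2), I would implement an equivariant Quillen plus construction. Acyclicity of $X$ forces $\pi_1(X)$ to be perfect. Select loops $\alpha_i\colon S^1\to X$ whose $G$-orbits normally generate $\pi_1(X)$ and attach free 2-cell orbits $G\times D^2$ along each. The resulting complex $X'$ is simply connected, and a direct cellular computation, exploiting $H_1(X)=0$ so each $[\alpha_i]$ is a $1$-boundary in $X$, yields a short exact sequence of $\mathbb{Z}[G]$-modules
\[0\to Z_2(X)\to Z_2(X')\to F\to 0,\]
where $F$ is the free $\mathbb{Z}[G]$-module on the $2$-cell orbits. Combined with $Z_2(X)=B_2(X)$ (from $H_2(X)=0$), this identifies $H_2(X')\cong F$ as a free $\mathbb{Z}[G]$-module. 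I then attach one free $3$-cell orbit per $2$-cell orbit, with attaching spheres chosen via the Hurewicz theorem to realize a $\mathbb{Z}[G]$-basis of $H_2(X')$. The resulting complex is simply connected and acyclic, hence contractible by Whitehead, and all attached cells have dimension at most $3$.
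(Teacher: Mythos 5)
Your Part (2) is essentially the paper's argument---kill the perfect fundamental group with free $2$-cells, identify $H_2$ of the result as a free $\mathbb{Z}[G]$-module (your cycle computation and the paper's long exact sequence of the pair amount to the same thing), attach free $3$-cells along a $\mathbb{Z}[G]$-basis via Hurewicz, and conclude by Whitehead. That part is correct.

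Part (1) has a genuine gap: the ``critical algebraic fact'' you build on is false. Smith $p$-acyclicity does \emph{not} force each $\widetilde H_k(X;\mathbb{F}_p)$ to be cohomologically trivial or projective. Take $G=P=\mathbb{Z}/p$ and $X=\mathrm{pt}\sqcup S^1$, a fixed point disjoint union a circle with free rotation action. The only nontrivial $p$-subgroup is $P$ itself and $X^P=\mathrm{pt}$ is $p$-acyclic, so $X$ is Smith $p$-acyclic of dimension $1$; yet $\widetilde H_0(X;\mathbb{F}_p)\cong\mathbb{F}_p$ with trivial action, which is neither projective over $\mathbb{F}_p[P]$ nor cohomologically trivial. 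Two steps of your derivation break down. First, the non-fixed strata need not carry a free $P$-action: cells outside $X^P$ can have nontrivial proper isotropy $Q<P$, and $\mathbb{F}_p[P/Q]$ is not projective for $Q\neq 1$; handling these intermediate strata is precisely why the hypothesis quantifies over \emph{all} nontrivial $p$-subgroups and why the argument in \cite{Assadi:FGASCMCWC} is an induction over isotropy types. Second, even a bounded acyclic-in-most-degrees complex of projectives can have non-projective individual homology modules (e.g.\ $\mathbb{F}_p[P]\xrightarrow{\;N\;}\mathbb{F}_p[P]$ with $N$ the norm element). The correct statement---the content of the results of Assadi cited in the paper---is weaker: one can attach free cells of dimension $\le m$ so as to kill all homology \emph{below} degree $m$, after which the \emph{top}-degree homology of the resulting complex is projective. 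Projectivity is a property of the residual obstruction after a bottom-up killing process, not of $\widetilde H_*(X)$ itself.

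Relatedly, your dimensional bookkeeping runs in the wrong direction. Attaching free $(m+1)$-cells to surject onto $H_m$ first creates new homology in degree $m+1$ (the kernel of your surjection), which cannot be ``absorbed into lower-dimensional attachments''; it would demand $(m+2)$-cells. One must work bottom-up: kill $H_k$ with free $(k+1)$-cells for $k<m$, and only then dispose of the projective top class by the Eilenberg swindle, adding free $m$- and $(m+1)$-cells to replace projective by free before capping off. Finally, the integral case of (1) is not addressed at all in your proposal; there the top homology must be shown projective over $\mathbb{Z}[G]$, which is where Rim's criterion (projectivity over each Sylow subgroup, one prime at a time) enters.
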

\begin{proof}
For the first claim, if $X$ is Smith acyclic, then \cite[Prop. I.1.6]{Assadi:FGASCMCWC} guarantees that we may achieve ayclicity below degree $m$ and $\mathbb{Z}[G]$-projectivity in degree $m$ by attaching free cells of dimension at most $m$. Thus, by the Eilenberg swindle, we may achieve acyclicity by further cell attachments of dimension $m$ and $m+1$. The Smith $p$-acyclic case is similar, invoking \cite[Lem. II.1.5]{Assadi:FGASCMCWC} instead, and obtaining instead a degree $m$ mod $p$ homology group projective over $\mathbb{F}_p[G]$. 

 For the second claim, note first that $X$ is path connected by acyclicity. We first kill the fundamental group of $X$ by attaching free $2$-cells indexed by a set $I$. Calling the resulting complex $Y$, the long exact sequence for the pair $(Y,X)$ shows that 
\[
\widetilde H_i(Y)\cong 
\begin{cases}
\bigoplus_I\mathbb{Z}[G]&\quad i=2\\
0&\quad \text{otherwise.}
\end{cases}
\] Thus, a simply connected acyclic $G$-complex may be obtained by attaching free $3$-cells, and any such complex is contractible by Whitehead's theorem.
\end{proof}

Our main construction will proceed inductively and one prime at a time. In order to state the main result, we require the following definition, which will form the basis for our induction.

\begin{definition}
Let $G$ be a finite group and $p$ a prime dividing $|G|$.
\begin{enumerate}
\item A subgroup $H\leq G$ is called a \emph{$p$-intersection} if it is an intersection of $p$-Sylow subgroups.
\item The \emph({$p$-Sylow) depth} of the $p$-intersection $H$ is the largest $d$ for which there is a chain $H=H_d<H_{d-1}<\cdots <H_1=P$ of proper inclusions with $P$ a $p$-Sylow subgroup and each $H_i$ a $p$-intersection.
\item The \emph{($p$-Sylow) depth} of a $p$-subgroup $H\leq G$ is the maximal depth of a $p$-intersection containing $H$.
\item The \emph{($p$-Sylow) depth} of $G$, denoted $d_p(G)$, is the maximal depth of a $p$-intersection in $G$.
\end{enumerate}
We adopt the convention that $d_p(G)=0$ if and only if $(p,|G|)=1$.
\end{definition}

It is easy to see that the depth of $H$ as a $p$-subgroup coincides with its depth as a $p$-intersection. It is also easy to see that the $d_p(G)$ is bounded above by the number of distinct $p$-Sylow subgroups of $G$, as well as by the exponent of $p$ in $|G|$.

\begin{lemma}\label{lem:rank goes down subgroup}
Let $H$ be a $p$-intersection of depth $d$ and $K$ any $p$-subgroup containing $H$. The depth of $K$ is at most $d$, with equality if and only if $H=K$.
\end{lemma}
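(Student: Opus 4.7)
The plan is to prove a stronger statement from which the lemma follows immediately: for any $p$-intersection $J$ with $H \subseteq J$, the depth of $J$ (as a $p$-intersection) is at most $d$, with equality if and only if $J = H$. Since the depth of $K$ as a $p$-subgroup is by definition the maximum of the depths of $p$-intersections containing $K$, and every such $p$-intersection also contains $H$, this strengthened claim implies both conclusions of the lemma.

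To establish the stronger claim, I would argue directly from the chain definition. Suppose the depth of $J$ equals $d'$, witnessed by a chain $J = J_{d'} < J_{d'-1} < \cdots < J_1 = P$ of proper inclusions, with $P$ a Sylow $p$-subgroup and each $J_i$ a $p$-intersection. If $H = J$, the claim is immediate. Otherwise, $H \subsetneq J$, and prepending $H$ yields a chain $H < J = J_{d'} < J_{d'-1} < \cdots < J_1 = P$ of length $d' + 1$. Since $H$ is itself a $p$-intersection by hypothesis, this new chain witnesses that the depth of $H$ is at least $d' + 1$; hence $d' + 1 \leq d$, i.e., $d' < d$.

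From here the lemma is routine unwinding. Every $p$-intersection $J$ containing $K$ also contains $H$, so the stronger claim yields that the depth of $J$ is at most $d$; taking the maximum over such $J$ gives that the depth of $K$ is at most $d$. If $H = K$, then $H$ itself is a $p$-intersection containing $K$ with depth $d$, so the maximum is attained. Conversely, if the depth of $K$ equals $d$, then some $p$-intersection $J \supseteq K$ has depth $d$; the stronger claim then forces $J = H$, whereupon $K \subseteq J = H \subseteq K$ gives $H = K$.

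There is no real obstacle here beyond careful bookkeeping between the two depth definitions; the parenthetical remark in the paper that the two notions coincide on $p$-intersections is used implicitly when we take $J = H$ as an optimal witness in the last paragraph. The sole substantive move is the observation that any chain witnessing the depth of $J$ can be extended by one further step whenever a proper $p$-intersection subgroup lies strictly below $J$.
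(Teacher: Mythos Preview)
Your proof is correct and takes essentially the same approach as the paper: both arguments observe that if $H$ is properly contained in a $p$-intersection $J$ of depth $d'$, then prepending $H$ to a chain witnessing the depth of $J$ shows $d\geq d'+1$. The paper compresses this into a single contradiction (writing the chain $H<K\leq H_s<\cdots<H_1=P$ and reading off depth $\geq s+1$), whereas you explicitly isolate the intermediate claim about $p$-intersections and then unwind the definition of depth for $p$-subgroups; the substance is identical.
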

\begin{proof}
We may assume that $H\neq K$. Supposing that $K$ has depth $s\geq d$, we obtain the chain of inclusions 
\[H<K\leq H_s<H_{s-1}<\cdots<H_1=P,\]
implying that the depth of $H$ is at least $s+1>d$, a contradiction.
\end{proof}

\begin{corollary}\label{cor:containment}
Let $H_1$ and $H_2$ be $p$-intersections. If $H_1< H_2$, then the depth of $H_1$ is greater than the depth of $H_2$.
\end{corollary}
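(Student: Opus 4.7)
The plan is to derive this directly from Lemma \ref{lem:rank goes down subgroup} with essentially no additional work. The setup is tailor-made: we have a $p$-intersection $H_1$ contained strictly inside another $p$-intersection $H_2$, and in particular $H_2$ is a $p$-subgroup containing $H_1$.

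Concretely, I would apply Lemma \ref{lem:rank goes down subgroup} with $H := H_1$ (playing the role of the $p$-intersection) and $K := H_2$ (playing the role of the $p$-subgroup containing it). Writing $d$ for the depth of $H_1$, the lemma yields that the depth of $H_2$ is at most $d$, with equality if and only if $H_1 = H_2$. Since the hypothesis $H_1 < H_2$ is a strict containment, equality is ruled out, so the depth of $H_2$ is strictly less than $d$.

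The one small point to mention is that the depth of $H_2$ referred to by the lemma is its depth as a $p$-subgroup, whereas in the statement we are comparing depths of two $p$-intersections. As noted in the paragraph following the definition of depth, the depth of a subgroup that happens to be a $p$-intersection, computed as a $p$-subgroup, agrees with its depth as a $p$-intersection, so the two uses of the word are interchangeable here.

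There is really no obstacle beyond this bookkeeping: the corollary is simply the contrapositive, specialized to the case when $K$ itself is a $p$-intersection, of the strict-inequality half of the lemma. The write-up should be a single short paragraph invoking Lemma \ref{lem:rank goes down subgroup} and observing that the strict containment forces strict inequality of depths.
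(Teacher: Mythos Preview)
Your proposal is correct and matches the paper's approach exactly: the paper states this as an unproved corollary immediately following Lemma \ref{lem:rank goes down subgroup}, so the intended argument is precisely the one-line application you describe, including the observation that depth as a $p$-subgroup and as a $p$-intersection coincide.
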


\begin{corollary}\label{cor:uniqueness}
If $K$ is a $p$-subgroup of depth $d$, then $K$ is contained in a unique $p$-intersection of depth $d$.
\end{corollary}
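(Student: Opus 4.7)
The plan is to argue uniqueness by intersecting two candidate $p$-intersections of maximal depth and applying Corollary \ref{cor:containment}. Existence is immediate from the definition of the depth of a $p$-subgroup.

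Suppose $H_1$ and $H_2$ are $p$-intersections of depth $d$ both containing $K$. First I would observe that $H_1 \cap H_2$ is itself a $p$-intersection, since each $H_i$ is by definition an intersection of $p$-Sylow subgroups, so $H_1 \cap H_2$ is as well. Furthermore, $H_1 \cap H_2$ contains $K$, so by definition of the depth of the $p$-subgroup $K$, the depth of $H_1 \cap H_2$ is at most $d$.

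Now I would apply Corollary \ref{cor:containment}: if $H_1 \cap H_2$ were strictly contained in $H_1$, then its depth would be strictly greater than $d$, a contradiction. Hence $H_1 \cap H_2 = H_1$, and symmetrically $H_1 \cap H_2 = H_2$, so $H_1 = H_2$.

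There is no real obstacle here; the only subtlety is remembering to use that $H_1 \cap H_2$ is again a $p$-intersection, which is exactly what makes the definition of ``$p$-intersection'' well-adapted to this kind of maximality argument.
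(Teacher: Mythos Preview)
Your proof is correct and follows essentially the same approach as the paper: both intersect the two candidate $p$-intersections, observe that the result is again a $p$-intersection, and use Corollary \ref{cor:containment} to force a contradiction with the depth of $K$. The only cosmetic difference is that the paper assumes $H_1\neq H_2$ at the outset and phrases the contradiction as ``$K$ has depth greater than $d$,'' whereas you phrase it as ``$H_1\cap H_2$ has depth at most $d$ and greater than $d$.''
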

\begin{proof}
Let $H_1\neq H_2$ be $p$-intersections of depth $d$ containing $K$. Then $H_1\cap H_2$ is a $p$-intersection properly contained in $H_1$, hence of strictly greater depth by Corollary \ref{cor:containment}. It follows that $K$ has depth greater than $d$, a contradiction.
\end{proof}

We write $\mathcal{I}_d=\mathcal{I}_d(p)$ for the set of $p$-intersections (in $G$, implicitly) of depth $d$, regarded as a $G$-set under conjugation.

\begin{convention}
For the remainder of this section, we assume that $G$ is a finite group not of prime power order.
\end{convention}

We come now to the main construction (compare \cite[Thm. II.1.4]{Assadi:FGASCMCWC}).

\begin{theorem}\label{thm:one prime}
Let $p$ be a prime dividing $|G|$. For $0\leq d\leq d_p(G)$, there are $G$-complexes $X_p(G)_d$ with the following properties:
\begin{enumerate}
\item $X_p(G)_0$ is free of dimension $1$;
\item $X_p(G)_{d+1}$ is obtained from $X_p(G)_{d}$ by attaching cells of dimension at most $d+2$ with isotropy in $\mathcal{I}_{d+1}$;
\item $X_p(G)_d^P$ is $p$-acyclic for every nontrivial $p$-subgroup $P$ of depth at most $d$.
\end{enumerate}
\end{theorem}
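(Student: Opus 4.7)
The plan is to proceed by induction on $d$. For the base case $d = 0$, take $X_p(G)_0$ to be any connected free $G$-complex of dimension $1$, e.g., the $1$-skeleton of a CW model of $EG$; properties (1) and (2) are immediate, and (3) holds vacuously because every nontrivial $p$-subgroup has depth at least $1$.

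Before entering the inductive step, I record a key identity: for every nontrivial $p$-subgroup $P$ and every $p$-intersection $H$, one has $(G/H)^P = (G/H)^{I(P)}$, where $I(P)$ denotes the smallest $p$-intersection containing $P$. The nontrivial inclusion holds because $a^{-1}Pa \leq H$ forces $a^{-1}I(P)a = I(a^{-1}Pa) \leq H$ by minimality of $I$ applied to the conjugate $a^{-1}Pa$. Starting from $X_0^P = \varnothing = X_0^{I(P)}$ for all nontrivial $P$ and observing that each cell attachment in the construction adds identical subcomplexes to both $X^P$ and $X^{I(P)}$, induction yields $X^P = X^{I(P)}$ at every stage.

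For the inductive step, assume $X := X_p(G)_d$ has been built, with $\dim X \leq d+1$. I attach $G$-cells of dimension at most $d+2$ with isotropy in $\mathcal{I}_{d+1}$. When $\mathrm{depth}(P) \leq d$, any such cell contributes nothing to $X^P$, since $a^{-1}Pa \leq H$ would force $\mathrm{depth}(P) \geq d+1$, so property (3) is inherited automatically. When $P$ has depth $d+1$, Corollary~\ref{cor:uniqueness} gives $I(P) \in \mathcal{I}_{d+1}$, and the identity above reduces the problem to making $X^H$ mod-$p$ acyclic for each $H \in \mathcal{I}_{d+1}$. The same uniqueness argument ensures that cells of isotropies in distinct $G$-orbits of $\mathcal{I}_{d+1}$ contribute nothing to one another's fixed-point sets, so one may treat each conjugacy class independently.

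Fix a representative $H \in \mathcal{I}_{d+1}$ and set $W := N(H)/H$. A $G$-cell $G \times_H D^{k+1}$ attached via some $\phi : S^k \to X^H$ contributes a free $W$-orbit of $(k+1)$-cells to $X^H$, and every such attachment on $X^H$ lifts canonically to an equivariant $G$-cell. Since $\dim X^H \leq d+1$, the task reduces to killing $\widetilde{H}_*(X^H; \mathbb{F}_p)$ using free $\mathbb{F}_p[W]$-cells of dimension at most $d+2$. This is handled by the $\mathbb{F}_p[W]$-analogue of the projectivity-plus-Eilenberg-swindle argument underlying Proposition~\ref{prop:smith projective}(1): inductively kill $\widetilde{H}_k$ by attaching free $W$-cells of dimension $k+1$, then dispose of the top-dimensional residue via the projectivity guaranteed by \cite[Lem.~II.1.5]{Assadi:FGASCMCWC}. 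The main obstacle is precisely this last step: respecting the dimension bound requires Assadi's $\mathbb{F}_p[W]$-projectivity input, without which a naive cell-killing argument would demand dimension $d+3$.
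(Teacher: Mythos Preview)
Your approach is essentially the same as the paper's, and your identity $X^P = X^{I(P)}$ is a clean way to package bookkeeping that the paper handles more implicitly in its final paragraph. There is, however, one genuine gap in the inductive step.

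You reduce to killing $\widetilde H_*(X^H;\mathbb{F}_p)$ by attaching free $W$-cells of dimension at most $d+2$, and for the top-degree projectivity you invoke Proposition~\ref{prop:smith projective}(1) and \cite[Lem.~II.1.5]{Assadi:FGASCMCWC}. Both of these require the $W$-complex $X^H$ to be \emph{Smith $p$-acyclic}, and you never check this. The point is not cosmetic: the cells of $X^H$ inherited from $X_p(G)_d$ are not free $W$-cells in general (a cell $G/K$ with $K\in\mathcal{I}_j$, $j\leq d$, contributes $W$-cells with isotropy $(N(H)\cap aKa^{-1})/H$), so projectivity of the top homology over $\mathbb{F}_p[W]$ is not automatic. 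Without it, as you yourself observe, the naive argument overshoots to dimension $d+3$.

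The missing verification is short and is exactly what the paper isolates as the heart of the induction. For a nontrivial $p$-subgroup $Q\leq W=N(H)/H$ with preimage $\widetilde Q\leq N(H)$, one has $(X^H)^Q=X^{\widetilde Q}$; since $H<\widetilde Q$ is a proper inclusion, Lemma~\ref{lem:rank goes down subgroup} forces $\mathrm{depth}(\widetilde Q)\leq d$, and the inductive hypothesis (3) then gives $p$-acyclicity. You should state this explicitly.

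One smaller remark: with your free $X_0$ you have $X_0^H=\varnothing$ for $H$ a Sylow, so the step $d=0\to 1$ must begin by attaching $0$-cells with isotropy $H$. This is within your dimension bound $d+2=2$, but Proposition~\ref{prop:smith projective} as stated assumes a nonempty complex, and the projectivity here comes from Maschke's theorem (since $p\nmid |W|$ when $H$ is Sylow) rather than from Assadi's lemma. The paper separates out this case for exactly this reason.
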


\begin{lemma}\label{lem:induction fixed points}
Let $H\leq G$ be a subgroup and $X$ an $N(H)$-space. For any $g\in G$, there is a canonical homeomorphism
\[\left(G\times_{N(H)} X\right)^{gHg^{-1}}\cong X^H.\]
\end{lemma}
\begin{proof}
Consider the standard decomposition $G\times_{N(H)} X=\bigsqcup_{[g_i]\in G/N(H)} g_iX$, and take $g$ to be one of our coset representatives. Writing $ghg^{-1}g_i=g_jh'$, we have $ghg^{-1}\cdot g_ix=g_j(h'\cdot x)$. It follows that $g_ix$ is fixed by $gHg^{-1}$ if and only if $x$ is fixed by $H$ and $[g]=[g_i]$, which is to say $g=g_i$. Thus, the desired homeomorphism is given (from right to left) by $x\mapsto gx$.
\end{proof}

\begin{proof}[Proof of Theorem \ref{thm:one prime}]
We proceed by simultaneous induction on $d$ and $d_p(G)$. In the case $d=d_p(G)=0$, we let $X_p(G)_0$ be any connected $1$-dimensional free $G$-complex, e.g., a Cayley graph. Notice that the third condition is vacuous in this case. For $d=0$ and general $G$, we choose a $p$-Sylow subgroup $P$ and set
\[X_p(G)_0=G\times_{N(P)} X_p(N(P)/P)_0.\]

For $d=1$, let $P$ be as above and consider $X_p(N(P)/P)_0$. Since $P$ is $p$-Sylow, the group ring $\mathbb{F}_p[N(P)/P]$ is semisimple by Maschke's theorem. It follows that $\widetilde H_1(X_p(N(P)/P)_0;\mathbb{F}_p)$ is projective over $\mathbb{F}_p[N(P)/P]$; therefore, by Proposition \ref{prop:smith projective} and the Eilenberg swindle, we may attach free cells of dimension $1$ and $2$ to obtain a $p$-acyclic $N(P)/P$-complex $\overline{X}_p(N(P)/P)_0$. Finally, we define 
\[X_p(G)_1=G\times_{N(P)}\overline{X}_p(N(P)/P)_{0}.\] The second property holds by construction, and the third follows from the observation that, by Lemma \ref{lem:induction fixed points}, the fixed set of any $p$-Sylow subgroup is homeomorphic to $\overline{X}_p(N(P)/P)_0$, which is $p$-acyclic by construction.

In the general case, choose a $p$-intersection $H\leq G$ of depth $d+1$ and consider the $N(H)/H$-space $X_p(G)_d^H$. We claim that this space is Smith $p$-acyclic; indeed, given a nontrivial $p$-subgroup $P\leq N(H)/H$, we have $(X_p(G)_d^H)^P=X_p(G)_d^{\widetilde P}$, where $H\leq \widetilde P$ is the subgroup of $N(H)$ corresponding to $P$, and the depth of $\widetilde P$ is at most $d$ by Lemma \ref{lem:rank goes down subgroup}, since $P$ was assumed nontrivial, so the claim follows by induction. Therefore, by Proposition \ref{prop:smith projective}, we may achieve $p$-acyclicity after attaching free $N(H)/H$-cells of dimension at most $d+2$, and, indexing these cells by $i\in I$, we achieve the same result $G$-equivariantly for all conjugates of $H$ at once via the construction
\[G\times_{N(H)}\left(\bigsqcup_{i\in I}N(H)/H\times D^{n_i}\right)\bigsqcup_{G\times \bigsqcup_{i\in I}N(H)/H\times S^{n_i-1}}X_p(G)_d.\]
By Corollary \ref{cor:containment}, this construction does not alter the fixed set of any member of $\mathcal{I}_{d+1}$ not conjugate to $H$; therefore, we may define $X_p(G)_{d+1}$ to be the result of iterating the construction over $\mathcal{I}_{d+1}/G$. 

The second condition holds by construction. To check the third, we note that, if $P$ has depth less than $d$, then $X_p(G)_d^P=X_p(G)_{d-1}^P$ by construction, since $P$ is contained in no member of $\mathcal{I}_d$ by definition, and the latter is $p$-acyclic by induction. On the other hand, if $P$ has depth $d$, then $P$ is contained in a \emph{unique} $H\in \mathcal{I}_d$ by Corollary \ref{cor:uniqueness}, so $X_p(G)_d^P=X_p(G)_d^H$, which was constructed to be $p$-acyclic.
\end{proof}

We write $X_p(G)=X_p(G)_{d_p(G)}$.

\begin{corollary}
There is a $G$-complex $X(G)$ with the following properties:
\begin{enumerate}
\item $X(G)$ is obtained from $\bigsqcup_{p\mid |G|} X_p(G)$ by attaching free cells of dimension at most $\max_p d_p(G)+2$
\item $X(G)$ is contractible.
\end{enumerate}
\end{corollary}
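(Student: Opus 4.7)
The plan is to set $Y := \bigsqcup_{p\mid |G|} X_p(G)$ and then apply the two parts of Proposition \ref{prop:smith projective} in succession: first to produce an acyclic $G$-complex from $Y$, and then to promote this to a contractible one.

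The crucial observation is that every isotropy group appearing in $X_p(G)$ is a $p$-subgroup. This holds for $X_p(G)_0$ directly from the construction (the isotropy is either trivial or conjugate to a $p$-Sylow), and propagates inductively because Theorem \ref{thm:one prime}(2) attaches cells whose isotropy lies in $\mathcal{I}_{d+1}(p)$, a collection of $p$-intersections. Consequently, for any prime $q\neq p$ and any nontrivial $p$-subgroup $P$, no cell of $X_q(G)$ can contain $P$ in its isotropy, so $X_q(G)^P=\varnothing$. It follows that for any nontrivial $p$-subgroup $P$,
\[
Y^P \;=\; \bigsqcup_{q\mid |G|} X_q(G)^P \;=\; X_p(G)^P,
\]
which is $p$-acyclic by Theorem \ref{thm:one prime}(3), since the depth of $P$ is at most $d_p(G)$. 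Hence $Y$ is Smith acyclic.

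The inductive dimension bounds in Theorem \ref{thm:one prime} give $\dim X_p(G)\leq d_p(G)+1$, so $\dim Y\leq \max_p d_p(G)+1$. Proposition \ref{prop:smith projective}(1) then produces an acyclic $G$-complex $Y'$ from $Y$ by attaching free cells of dimension at most $\max_p d_p(G)+2$; a subsequent application of Proposition \ref{prop:smith projective}(2) produces a contractible $G$-complex $X(G)$ from $Y'$ by attaching free cells of dimension at most $3$. Since $|G|>1$ is divisible by some prime, contributing $d_p(G)\geq 1$, we have $\max_p d_p(G)+2\geq 3$, so all cells attached in either stage are of dimension at most $\max_p d_p(G)+2$, as required.

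The only subtlety I anticipate is the verification that $X_q(G)^P=\varnothing$ for $q\neq p$ and $P$ a nontrivial $p$-subgroup; once this isotropy-theoretic observation is in hand, the collapse $Y^P=X_p(G)^P$ makes Smith acyclicity of the disjoint union immediate, and the rest is a routine invocation of the preceding results.
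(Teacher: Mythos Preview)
Your proof is correct and follows essentially the same approach as the paper: you verify Smith acyclicity of $\bigsqcup_p X_p(G)$ via the isotropy observation that $X_q(G)^P=\varnothing$ for nontrivial $p$-subgroups $P$ when $q\neq p$, then invoke Proposition~\ref{prop:smith projective} together with the dimension bound $\dim X_p(G)\leq d_p(G)+1$ and the inequality $\max_p d_p(G)+2\geq 3$. The only difference is that you spell out the isotropy argument in slightly more detail than the paper, which simply says ``by construction.''
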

\begin{proof}
By construction, given $p\neq q$ dividing $|G|$, every $p$-subgroup of $G$ acts without fixed points on $X_q(G)$, so the third condition of Theorem \ref{thm:one prime} implies that the disjoint union in question is Smith acyclic. Since the dimension of $X_p(G)$ is $d_p(G)+1$, and since $\max_p d_p(G)+2\geq 3$, Proposition \ref{prop:smith projective} shows that we may achieve first acyclicity, then contractibility after the indicated type of cell attachment.
\end{proof}

\section{Proofs of the main results}

Our strategy will be to exploit Corollary \ref{cor:contractible} by applying obstruction theory to the complex $X(G)$ constructed in the previous section. In order to proceed, we require information on the connectivity of fixed point sets.

\begin{proposition}\label{prop:skeleton fixed points}
For any $H\leq G$ and any $n\in\mathbb{Z}$, there is a canonical $N(H)/H$-equivariant homeomorphism
\[\left(\Delta^G_n\right)^H\cong \Delta^{G/H}_k,\]
where $k={\lfloor\frac{n+1}{|H|}\rfloor-1}$.
\end{proposition}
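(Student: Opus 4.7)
The plan is to exhibit an explicit homeomorphism induced by the orbit projection $\pi:G\to G/H$ and then verify bijectivity, continuity, and equivariance in turn. Since $H$ acts freely on $G$ by translation, every $H$-orbit has cardinality exactly $|H|$, and an $H$-fixed measure $\mu=\sum_g t_g\delta_g\in\Delta^G_n$ must assign the same weight to every element of any given orbit; hence its support is a union of $H$-orbits and, having cardinality at most $n+1$, consists of at most $\lfloor(n+1)/|H|\rfloor$ of them. The pushforward $\pi_*$ therefore restricts to a well-defined bijection
\[\Phi:(\Delta^G_n)^H\longrightarrow\Delta^{G/H}_{\lfloor(n+1)/|H|\rfloor-1},\]
whose inverse sends $\sum_i s_i\delta_{[g_i]}$ with distinct cosets to the averaged lift $\sum_i (s_i/|H|)\sum_{h\in H}\delta_{g_ih}$.

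Continuity of $\Phi$ follows from the continuity of the pushforward $\pi_*:\mathcal{P}(G)\to\mathcal{P}(G/H)$, which holds because its precomposition with the parametrizations $G^m\times\Delta^{m-1}\to\mathcal{P}(G)$ defining the quotient topology factors through the continuous projections $\pi^m\times\id$; restriction to a subspace is then automatically continuous. For continuity of the inverse, which is the only genuinely delicate step, set $k=\lfloor(n+1)/|H|\rfloor$ and observe that the continuous map
\[G^k\times\Delta^{k-1}\longrightarrow(\Delta^G_n)^H,\qquad (g,s)\mapsto\sum_{i=1}^k\frac{s_i}{|H|}\sum_{h\in H}\delta_{g_ih}\]
is invariant under the coordinatewise $H^k$-action on $G^k$ by right translation and therefore descends to a continuous map out of $(G/H)^k\times\Delta^{k-1}$. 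Since the latter is the defining parametrization for the quotient topology on $\Delta^{G/H}_{k-1}$, the universal property of the quotient topology delivers continuity of $\Phi^{-1}$.

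Finally, $\pi$ intertwines the natural $N(H)$-actions on $G$ and on $G/H$, so $\pi_*$ is $N(H)$-equivariant, and the induced $H$-action is trivial on both sides after passage to $H$-fixed points; hence $\Phi$ is $N(H)/H$-equivariant, as required. The bijection and the equivariance are essentially combinatorial; the only real obstacle is continuity of $\Phi^{-1}$, and even this reduces, once the right parametrization has been identified, to a routine application of the universal property of the quotient topology on $\mathcal{P}(G/H)$.
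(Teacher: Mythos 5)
Your proof is correct and is essentially the paper's argument: both rest on the same explicit correspondence between $H$-fixed measures on $G$ and measures on $G/H$ given by pushforward along $G\to G/H$ with the coset-averaged lift as inverse, and the support-counting that produces the floor function is identical. The only difference is in the point-set bookkeeping---you verify continuity of the lift by descending a parametrization through the quotient-topology presentation of $\mathcal{P}(G/H)$ (which tacitly uses that $(G/H)^k\times\Delta^{k-1}\to\Delta^{G/H}_{k-1}$ is a quotient map, true here since the source is compact and the target Hausdorff), whereas the paper notes that the lift is the restriction of a linear map $\Delta^{G/H}\to\Delta^{G}$ and concludes it is a homeomorphism from compactness and Hausdorffness; both routes are valid for finite $G$.
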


\begin{corollary}\label{cor:connectivity}
For any $H\leq G$ and $n<|G|$, the connectivity of $(\Delta_n^G)^H$ is exactly $\lfloor \frac{n+1}{|H|}\rfloor-2$.
\end{corollary}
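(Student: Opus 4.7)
The plan is to reduce, via Proposition \ref{prop:skeleton fixed points}, to the classical computation of the connectivity of a proper skeleton of a simplex.

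Set $m=|G/H|$ and $k=\lfloor(n+1)/|H|\rfloor$, so that Proposition \ref{prop:skeleton fixed points} yields $(\Delta_n^G)^H\cong \Delta_{k-1}^{G/H}$. By construction this space is nothing but the $(k-1)$-skeleton of the standard $(m-1)$-simplex on vertex set $G/H$, and the hypothesis $n<|G|$ guarantees $k\leq m$. In the generic range $k<m$ I would establish the connectivity to be exactly $k-2$ by proving matching lower and upper bounds.

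For the lower bound, cellular approximation shows that the inclusion $\Delta_{k-1}^{G/H}\hookrightarrow \Delta^{m-1}$ induces isomorphisms on $\pi_i$ for $i\leq k-2$; since the codomain is contractible, these groups all vanish. For the upper bound I would compute $\widetilde{H}_{k-1}(\Delta_{k-1}^{G/H})$ directly from the simplicial chain complex of $\Delta^{m-1}$. Contractibility of the ambient simplex makes its augmented chain complex acyclic, and the skeleton's chain complex is obtained by truncating above degree $k-1$, so $\widetilde{H}_{k-1}(\Delta_{k-1}^{G/H})$ is isomorphic to the image of the boundary out of the $k$-chains, a free abelian group of positive rank (namely $\binom{m-1}{k}$) whenever $k<m$. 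For $k\geq 3$ the skeleton is already simply connected, since it contains the full $2$-skeleton of $\Delta^{m-1}$, and Hurewicz then forces $\pi_{k-1}\neq 0$. The cases $k\in\{0,1,2\}$ I would dispatch by inspection, using that $\Delta_0^{G/H}$ is a discrete $m$-point set and $\Delta_1^{G/H}$ is the complete graph on $m$ vertices.

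The argument is essentially routine and no step presents a serious obstacle. The only subtlety worth flagging is the boundary case $k=m$ (which occurs precisely when $n=|G|-1$), where the skeleton coincides with the full contractible simplex; this degenerate case sits outside the range where the stated formula is meaningful and should be excluded from the computation or absorbed into a separate convention.
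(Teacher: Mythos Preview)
Your proposal is correct and matches the paper's approach, which treats the corollary as immediate from Proposition~\ref{prop:skeleton fixed points} together with the classical fact that the $(k-1)$-skeleton of an $(m-1)$-simplex is exactly $(k-2)$-connected when $k<m$. Your remark about the boundary case $n=|G|-1$ is well taken: the word ``exactly'' in the statement is not quite accurate there, since the full simplex is contractible, but the paper's applications use only the lower bound on connectivity.
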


\begin{proof}[Proof of Proposition \ref{prop:skeleton fixed points}]
Since $H$ is finite, we may define a function $f:\Delta^{G/H}\to \Delta^G$ by the formula
\[f(t)_g=\frac{1}{|H|} t_{gH}.\] 
As the restriction of a linear map, this function is continuous, and its image is $H$-fixed by inspection; thus, we may we view $f$ as a map to $(\Delta^G)^H$. As such, it is $N(H)/H$-equivariant and injective by inspection, and we claim that it is also surjective. To see why, note that a point in $\Delta^G$ is fixed by $H$ if and only if the barycentric coordinate of $gh$ is independent of $h\in H$ for every $g\in G$. Thus, given $t\in (\Delta^G)^H$, setting $t_{gH}= |H|t_g$ defines an element of $f^{-1}(t)$. The homeomorphism $(\Delta^G)^H\cong \Delta^{G/H}$ follows, since both sides are compact and Hausdorff.

Now, a point of $\Delta^G$ lies in $\Delta_n^G$ if and only if at most $n+1$ of its barycentric coordinates are nonzero. We conclude that $f$ identifies $(\Delta_n^G)^H$ with the subspace of $\Delta^{G/H}$ in which at most $\frac{n+1}{|H|}$ barycentric coordinates are nonzero, as desired.
\end{proof}

In what follows, we write $X_p^\wedge$ for the completion of the space $X$ at the prime $p$---see \cite{MayPonto:MCAT}, for example. We recall that, according to one of several results known collectively as the ``generalized Sullivan conjecture,'' due to Carlsson \cite[Thm. B(c)]{Carlsson:ESHSC} and Dwyer--Miller--Neisendorfer \cite{DwyerMillerNeisendorfer:FCUASS}, the natural map $(X^P)_p^\wedge\to (X_p^\wedge)^{hP}$ is a weak equivalence for any $p$-group $P$ and finite dimensional $P$-CW complex $X$.


\begin{proposition}\label{prop:p-sub}
For any $p$-subgroup $P\leq G$, we have $\mathrm{acat}(G)\geq |P|-1$. If $P$ is not self-normalizing, then $\mathrm{acat}(G)\geq 2|P|-1$.
\end{proposition}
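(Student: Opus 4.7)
The plan is to apply Proposition~\ref{prop:hfp} in contrapositive form, so that each claimed lower bound on $\mathrm{acat}(G)$ becomes a statement of the form $(\Delta_n^G)^{hG}=\varnothing$, and then to extract that emptiness by combining Sullivan's conjecture with the fixed-point formula of Proposition~\ref{prop:skeleton fixed points}.

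For the first bound, I assume for contradiction that $(\Delta_n^G)^{hG}\neq\varnothing$ with $n<|P|-1$. Restricting to $P$ and composing with the $p$-completion map yields $((\Delta_n^G)_p^\wedge)^{hP}\neq\varnothing$. Since $P$ is a $p$-group and $\Delta_n^G$ is a finite $P$-CW complex, Sullivan's conjecture identifies the latter with the $p$-completion of $(\Delta_n^G)^P\cong \Delta^{G/P}_{\lfloor (n+1)/|P|\rfloor -1}$, and this fixed set is empty whenever $n<|P|-1$, the desired contradiction.

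For the second bound, if $P$ is not a Sylow $p$-subgroup, standard $p$-group normalizer theory produces a $p$-subgroup $P'\supsetneq P$ of order $p|P|$, whence $\mathrm{acat}(G)\geq |P'|-1\geq 2|P|-1$ by the first bound. The substantive case is when $P$ is Sylow and $N(P)\supsetneq P$: then some prime $q\neq p$ divides $|N(P)/P|$, and I choose $Q\leq N(P)$ with $P\trianglelefteq Q$ and $|Q/P|=q$. Since the first bound already handles $n<|P|-1$, it remains to rule out $|P|-1\leq n<2|P|-1$, in which range $\lfloor(n+1)/|P|\rfloor-1=0$ and hence $(\Delta_n^G)^P\cong G/P$ as a discrete $Q/P$-space.

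Assuming $(\Delta_n^G)^{hG}\neq\varnothing$, restriction to $Q$ followed by $p$-completion produces $((\Delta_n^G)_p^\wedge)^{hQ}\neq\varnothing$. The standard decomposition $Y^{hQ}\simeq (Y^{hP})^{h(Q/P)}$ for a normal inclusion, combined with the $N(P)/P$-equivariant Sullivan equivalence $((\Delta_n^G)^P)_p^\wedge\simeq ((\Delta_n^G)_p^\wedge)^{hP}$, then forces $(G/P)^{h(Q/P)}\neq\varnothing$, since $p$-completion is the identity on a finite discrete set. To obtain the contradiction, I observe that the residual right action $gP\cdot nP=gnP$ of $Q/P$ on $G/P$ is free---$gnP=gP$ forces $n\in P$---so any continuous map from the connected space $E(Q/P)$ to the discrete set $G/P$ is constant, and $Q/P$-equivariance of such a constant would demand a genuine fixed point, of which there are none. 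The main subtlety is verifying enough naturality of the Sullivan equivalence to make it $N(P)/P$-equivariant and to allow composition with $(-)^{h(Q/P)}$; this uses a model for $EP$ on which $N(P)$ acts, such as $EP=EN(P)$.
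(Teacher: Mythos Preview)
Your proof is correct and follows essentially the same strategy as the paper: reduce via Proposition~\ref{prop:hfp} to emptiness of homotopy fixed points, invoke the Sullivan conjecture to replace $((\Delta_n^G)_p^\wedge)^{hP}$ by the $p$-completion of the genuine fixed set computed in Proposition~\ref{prop:skeleton fixed points}, and then use that $N(P)/P$ acts freely on the discrete set $G/P$ to conclude emptiness after a further layer of homotopy fixed points. The only differences are cosmetic: the paper works uniformly with $N(P)/P$ rather than your prime-order subgroup $Q/P$, and it does not split into Sylow versus non-Sylow cases---your non-Sylow shortcut of passing to a strictly larger $p$-subgroup is a valid simplification, but the paper's argument already handles that case without it.
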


\begin{lemma}\label{lem:self-normalizing lemma}
Let $H\leq G$ be any subgroup. If $H$ is not self-normalizing, then $(G/H)^{hN(H)/H}=\varnothing$.
\end{lemma}
\begin{proof}
Our assumptions imply that $N(H)/H$ is a nontrivial group acting without fixed point on the discrete space $G/H$. An easy exercise shows that such a space admits no equivariant map from any connected $N(H)/H$-space with nontrivial action, hence no homotopy fixed point.
\end{proof}

\begin{proof}[Proof of Proposition \ref{prop:p-sub}]
We begin with a few elementary observations regarding the following commutative diagram of canonical maps, to which we will appeal throughout the argument:
{\tiny\[\xymatrix{
\Map(\pt,\Delta^G_n)^P\ar@{=}[d]\ar[r]&\left(\Map(\pt,\Delta_n^G)^P\right)_p^\wedge\ar@{=}[d]\ar[r]&\left(\Map(EG,(\Delta_n^G)_p^\wedge\right)^P\ar[d] &\Map(EG,\Delta_n^G)^P\ar[d]\ar[l] \\
(\Delta_n^G)^P\ar[r]& ((\Delta_n^G)^P)^\wedge_p\ar[r]&((\Delta_n^G)^\wedge_p)^{hP}& (\Delta_n^G)^{hP}\ar[l]
}\]}First, since the canonical map $EP\to EG$ is a $P$-equivariant homotopy equivalence, the third and fourth vertical arrows are weak equivalences. Second, since $N(P)/P$ acts canonically on the $P$-fixed set of any $G$-space, the arrows in the top row are all equivariant maps between $N(P)/P$-spaces. Third, by the Sullivan conjecture, the second map in the bottom row is a weak equivalence, and hence in the top row as well.

For the first claim of the proposition, it suffices to show that $(\Delta_n^G)^{hP}=\varnothing$ for $n<|P|-1$, since $(\Delta_n^G)^{hG}$ has a canonical map to this space. In this range, we have $(\Delta^G_n)^P=\varnothing$ by Proposition \ref{prop:skeleton fixed points}; in particular, this space is $p$-complete, so the first map in this row is also a weak equivalence (in fact, an equality, but this is irrelevant). We conclude that the target of the rightmost map is empty, so its source must be so as well.

For the second claim, it suffices as before to show that $(\Delta_n^G)^{hN(P)}=\varnothing$ for $|P|-1\leq n < 2|P|-1$. In this range, Proposition \ref{prop:skeleton fixed points} instead identifies $(\Delta_n^G)^P$ with the discrete $N(P)/P$-space $G/P$, which is also $p$-complete. As before, it follows that the first arrow in the bottom row of the diagram above is a weak equivalence, and hence in the top row as well. We also conclude from Lemma \ref{lem:self-normalizing lemma} that $(\Delta_n^G)^P$ has no homotopy fixed points for the action of $N(P)/P$. Since the homotopy fixed points functor preserves weak equivalences, a diagram chase as in the previous paragraph yields the conclusion that 
\[\left(\Map(EG,\Delta_n^G)^P\right)^{hN(P)/P}=\varnothing.\]
An easy exercise shows that this space is weakly equivalent to $(\Delta_n^G)^{hN(P)}$, and the claim follows.
\end{proof}

\begin{proof}[Proof of Theorem \ref{thm:range}]
The lower bound follows from Proposition \ref{prop:p-sub}. For the upper bound, it suffices by Corollary \ref{cor:contractible} to construct an equivariant map $X(G)\to \Delta_n^G$ for $n\geq 3q-1$, where $q$ is the largest prime power dividing $|G|$. To begin, since $X_p(G)_0$ is free of dimension $1$ for each prime $p$, there is no obstruction to constructing a map to $\Delta_n^G$ provided $n\geq 1$, which certainly holds in our situation. Proceeding inductively, we may extend an equivariant map from $X_p(G)_{d}$ to $X_p(G)_{d+1}$ provided $(\Delta_n^G)^H$ is $(d+1)$-connected for every $H\in\mathcal{I}_{d+1}(p)$; we will consider this question presently. Finally, equivariant maps from the various $X_p(G)$ may be extended to $X(G)$ provided $n\geq \max_p d_p(G)+2$, which certainly holds in our situation, since $d_p(G)$ is bounded above by the exponent of $p$ in $|G|$. 

Now, for $H\in \mathcal{I}_{d+1}(p)$, the connectivity of $(\Delta_n^G)^H$ is $\lfloor \frac{n+1}{|H|}\rfloor-2$ by Corollary \ref{cor:connectivity}, and $|H|\leq p^{s-d}$, where $p^s$ is the largest power of $p$ dividing $|G|$. Thus, it suffices to establish the inequality
\[1+\frac{d}{3}\leq \frac{q}{p^{s-d}}\] for every prime $p$ dividing $|G|$. Since $q\geq p^s$ by definition, the claim follows from the obvious inequality
\[1+\frac{d}{3}\leq p^d.\]
\end{proof}

\begin{proof}[Proof of Theorem \ref{thm:failure}]
Writing $|G|=qr$ with $q$ as above and appealing to Theorem \ref{thm:range}, we obtain the inequality
\[\frac{\mathrm{acat}(G)+1}{|G|}\leq \frac{3}{r}.\] If $r\notin\{1,2,3\}$, then the righthand side of this inequality is strictly less than $1$, implying the first claim. The second claim is immediate from Proposition \ref{prop:p-sub} and the universal upper bound. For the third claim, fix $N$ and $\epsilon$ and choose distinct primes $p>q$ and a natural number $s$ so that $N\leq (q-3)p^s$ and $\epsilon\leq \frac{3}{q}$. In this case, the desired bounds hold for the cyclic group of order $p^sq$. 
\end{proof}

\section{Analog vs. distributional}\label{section:comparison}

The goal of this section is to prove Theorem \ref{thm:comparison}, claiming that the analog category of the finite group $G$ coincides with its distributional category in the sense of \cite{DranishnikovJauhari:DTCLSC, Jauhari:OSVDTC}. We begin by recalling the relevant definitions.\footnote{For the sake of easier reading, we depart from the notation of \cite{DranishnikovJauhari:DTCLSC}.}

Throughout, we will use the subscript $\mathrm{LP}$ to refer to the L\'{e}vy--Prokhorov metric; thus, we have $\mathcal{P}_n(X)_{\mathrm{LP}}$ for $X$ metric, and we have $\mathcal{P}_n(f)_{\mathrm{LP}}$ for continuous $f$ with metric source. The rule for turning a definition of an analog invariant into that of a distributional invariant is to add this subscript; thus, the distributional sectional category of $f:X\to Y$ with metric source is the least $n$ for which $\mathcal{P}_{n+1}(f)_{\mathrm{LP}}\to Y$ admits a section, and this definition specializes to the definition of distributional category as in Definition \ref{def:analog}.

\begin{remark}
An issue deserving of care is that the distributional category is not defined if $X$ is not metrizable---for example, $X=BG$ with $G$ infinite. One possible workaround is to appeal to the fact that any CW complex is metrizable up to homotopy equivalence \cite{Cauty:RDLES}. Fortunately, since we confine our discussion here to finite groups, the issue does not arise.
\end{remark}

Our main technical result comparing these notions is the following.

\begin{proposition}\label{prop:comparison}
Let $f:X\to Y$ be a map with $X$ metric and $Y$ convenient. If $f$ is proper, then the analog and distributional sectional category of $f$ coincide.
\end{proposition}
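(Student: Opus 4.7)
The plan is to prove the stronger claim that the canonical identity map $\iota\colon \mathcal{P}_{n+1}(f)\to\mathcal{P}_{n+1}(f)_{\mathrm{LP}}$ is a homeomorphism under the hypotheses, which will immediately force equality of the two sectional categories. Continuity of $\iota$ is essentially formal: the quotient topology on $\mathcal{P}_{n+1}(X)$ is by construction the finest topology making the canonical surjections from the various $X^k\times\Delta^{k-1}$ continuous, and these surjections factor continuously through the Lévy--Prokhorov topology since weak convergence is coarser. Postcomposition with $\iota$ therefore sends analog sections to distributional sections, showing that the distributional sectional category is at most the analog sectional category, so the substance of the proposition is the reverse inequality.

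The technical heart is the following lemma: for any compact metric space $K$, the identity $\mathcal{P}_{n+1}(K)\to\mathcal{P}_{n+1}(K)_{\mathrm{LP}}$ is a homeomorphism. Indeed, the source is compact as the continuous image of $K^{n+1}\times\Delta^n$, while the target is Hausdorff as a metric space, and a continuous bijection from a compact space to a Hausdorff space is automatically closed, hence a homeomorphism. I would globalize this using the convenient structure on $Y$, which implies that a map out of $Y$ is continuous if and only if its restriction to each compact subset is continuous. Given a distributional section $s\colon Y\to\mathcal{P}_{n+1}(f)_{\mathrm{LP}}$ and a compact $K\subseteq Y$, properness forces $f^{-1}(K)$ to be compact, and $s|_K$ factors through $\mathcal{P}_{n+1}(f^{-1}(K))_{\mathrm{LP}}$. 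By the lemma this latter space is homeomorphic to $\mathcal{P}_{n+1}(f^{-1}(K))$, so $s|_K$ is continuous into the latter; composing with the closed embedding $\mathcal{P}_{n+1}(f^{-1}(K))\hookrightarrow\mathcal{P}_{n+1}(X)$ supplied by Theorem~\ref{thm:point set} yields continuity of $s|_K$ into $\mathcal{P}_{n+1}(f)$, and hence of $s$ itself by the compactly generated structure of $Y$.

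The main obstacle is the key lemma, which captures exactly why compactness collapses the two topologies; the surrounding assembly via properness and the convenient structure is then routine, though one must also verify that the Lévy--Prokhorov topology on probability measures supported in $f^{-1}(K)$ agrees with the subspace topology inherited from $\mathcal{P}_{n+1}(X)_{\mathrm{LP}}$, which is a standard fact from the theory of weak convergence of measures.
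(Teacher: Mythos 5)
Your proposal is correct and follows essentially the same route as the paper: continuity of the identity $\mathcal{P}_{n+1}(f)\to\mathcal{P}_{n+1}(f)_{\mathrm{LP}}$ for the easy inequality, the compact-to-Hausdorff argument for the homeomorphism over compact pieces, and properness plus the compactly generated structure of $Y$ to globalize a distributional section. The only place the paper works harder than your ``essentially formal'' gloss is the continuity of $X^k\times\Delta^{k-1}\to\mathcal{P}(X)_{\mathrm{LP}}$, which it establishes by first reducing to compact (hence separable) subspaces via preservation of sifted colimits and then checking weak convergence by integrating against continuous functions.
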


For the proof, we require the following.

\begin{lemma}\label{lem:identity}
For a metric space $X$, the identity function $\mathcal{P}_n(X)\to \mathcal{P}_n(X)_{\mathrm{LP}}$ is continuous for every finite $n\geq0$. If $X$ is compact, then each of these maps is a homeomorphism.
\end{lemma}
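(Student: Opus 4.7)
The plan is to reduce the continuity statement to an explicit estimate of the L\'{e}vy--Prokhorov distance and then use the standard compact-to-Hausdorff argument for the homeomorphism claim. First I would invoke the universal property of the quotient topology defining $\mathcal{P}_n(X)$: the identity map to $\mathcal{P}_n(X)_{\mathrm{LP}}$ is continuous if and only if the composite
\[
q_n\colon X^n\times \Delta^{n-1}\longrightarrow \mathcal{P}_n(X)_{\mathrm{LP}},\qquad (x,t)\mapsto \sum_{i=1}^n t_i\delta_{x_i},
\]
is continuous. Since $X^n\times \Delta^{n-1}$ is metrizable, it suffices to verify sequential continuity of $q_n$.

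Next I would carry out the L\'{e}vy--Prokhorov estimate. Suppose $(x^k,t^k)\to (x,t)$ in $X^n\times \Delta^{n-1}$, and set $\mu_k=q_n(x^k,t^k)$ and $\mu=q_n(x,t)$. Given $\varepsilon>0$, for all sufficiently large $k$ and all $i$ we have $d(x_i^k,x_i)<\varepsilon$ and $|t_i^k-t_i|<\varepsilon/n$. For any Borel set $A\subseteq X$, if $x_i^k\in A$ then $x_i\in A^\varepsilon$, so
\[
\mu_k(A)=\sum_{i:x_i^k\in A}t_i^k \leq \sum_{i:x_i\in A^\varepsilon} t_i +\varepsilon\leq \mu(A^\varepsilon)+\varepsilon,
\]
and the symmetric inequality follows by the same argument with the roles of $\mu_k$ and $\mu$ swapped. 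Hence $d_{\mathrm{LP}}(\mu_k,\mu)\leq \varepsilon$ eventually, proving continuity.

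For the second claim, I would note that when $X$ is compact, $X^n\times \Delta^{n-1}$ is compact, so its continuous image $\mathcal{P}_n(X)$ (with the quotient topology) is compact as well. The target $\mathcal{P}_n(X)_{\mathrm{LP}}$ is Hausdorff since it is metric, and the identity is a continuous bijection of the underlying sets; any continuous bijection from a compact space to a Hausdorff space is closed and therefore a homeomorphism.

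I do not expect any serious obstacle in this argument; the only point requiring care is bookkeeping with repeated indices in the sums defining $\mu_k$ and $\mu$ (since the $x_i^k$ need not be distinct), but indexing the estimates by $i\in\{1,\ldots,n\}$ rather than by the underlying points avoids this entirely. The compact case is then essentially automatic.
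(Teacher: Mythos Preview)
Your argument is correct, and in fact more direct than the paper's. One small technical point: the space $\mathcal{P}_n(X)$ is defined as a \emph{subspace} of $\mathcal{P}(X)$, which in turn carries the final topology with respect to \emph{all} of the maps $q_m\colon X^m\times\Delta^{m-1}\to\mathcal{P}(X)$. So the universal property you invoke is literally that of $\mathcal{P}(X)$, not of $\mathcal{P}_n(X)$: continuity of the identity $\mathcal{P}(X)\to\mathcal{P}(X)_{\mathrm{LP}}$ follows once every $q_m$ is continuous into the LP topology, and then you restrict to $\mathcal{P}_n$. Your L\'{e}vy--Prokhorov estimate is uniform in $m$, so this costs nothing, but the ``if and only if'' as stated for $\mathcal{P}_n$ alone would need justification.

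The paper takes a different route to the first claim. It reduces to the compact case by writing $\mathcal{P}(X)$ as a sifted colimit over compact $K\subseteq X$ and invoking the preservation result of Theorem~\ref{thm:point set}; once $X$ is compact (hence separable), the LP topology coincides with the topology of weak convergence, and continuity of $q_n$ is checked by pairing against continuous test functions. Your approach bypasses both the colimit machinery and the weak-convergence detour by estimating $d_{\mathrm{LP}}$ directly, which works for an arbitrary metric space without any separability hypothesis. The trade-off is that the paper's argument stays within its categorical framework, while yours is self-contained and shorter. The second claim (compact source, Hausdorff target, continuous bijection) is handled identically in both.
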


Note that the first claim of Lemma \ref{lem:identity} is simply the claim that the quotient topology on $\mathcal{P}(X)$ is finer than the L\'{e}vy--Prokhorov topology when both are defined, which can also be seen by considering an explicit basis for the latter \cite[\S 8]{Jauhari:OSVDTC}.

\begin{proof}[Proof of Lemma \ref{lem:identity}]
For the first claim, let $X$ be a metric space, and consider the commutative diagram
\[\xymatrix{
\colim_{K} \mathcal{P}(K)\ar[r]\ar[d]&\colim_{K} \mathcal{P}(K)_{\mathrm{LP}}\ar[d]\\
\mathcal{P}(X)\ar[r]&\mathcal{P}(X)_{\mathrm{LP}},
}\] where $K$ ranges over compact subspaces of $X$. The collection of such is filtered, hence sifted, so the lefthand arrow is a homeomorphism by Theorem \ref{thm:point set}. Thus, in order to establish continuity of the bottom arrow, it suffices to establish continuity of the top arrow; in other words, we may assume that $X$ itself is compact. From the definition of $\mathcal{P}(X)$, continuity is equivalent to continuity of each of the maps 
\begin{align*}
X^n\times \Delta^{n-1}&\longrightarrow\mathcal{P}(X)_{\mathrm{LP}}\\
(x,k)&\mapsto \sum_{i=1}^n t_i \delta_{x_i}.
\end{align*} which is to say sequential continuity, since the source is metric. Since $X$ is compact, hence separable, the topology on the target is the topology of weak convergence of measures, so sequential continuity follows from continuity of the composite
\[X^n\times\Delta^{n-1}\xrightarrow{f^n\times \iota}\mathbb{R}^n\times\mathbb{R}^{n}\xrightarrow{\langle-,-\rangle}\mathbb{R},\] where $\iota$ is the inclusion and $f:X\to \mathbb{R}$ is an arbitrary continuous function.


For the second claim, if $X$ is compact, then so is $\mathcal{P}_n(X)$. Since $\mathcal{P}_n(X)_{\mathrm{LP}}$, as a metric space, is Hausdorff, and since the map in question is a continuous bijection, the claim follows.
\end{proof}

\begin{proof}[Proof of Proposition \ref{prop:comparison}]
It follows from Lemma \ref{lem:identity} that the top arrow in the commutative diagram
\[
\xymatrix{
\mathcal{P}_n(f)\ar[rr]^-{\id}\ar[dr]&&\mathcal{P}_n(f)_{\mathrm{LP}}\ar[dl]\\
&Y
}
\] is continuous, so a section of the lefthand map determines a section of the right. Thus, the distributional sectional category bounds the analog from below. For the reverse inequality, we will show that any section $\sigma$ of the righthand map is continuous when considered as a map to $\mathcal{P}_n(f)$. By assumption, the space $Y$ is the colimit of its compact subsets; therefore, since $\sigma|_K$ factors through $\mathcal{P}_n(f|_{f^{-1}(K)})_{\mathrm{LP}}$, we may assume without loss of generality that $Y$ itself is compact. In this case, since $f$ is proper, it follows that $X$ is also compact, and Lemma \ref{lem:identity} implies the claim.
\end{proof}

\begin{lemma}\label{lem:lp homotopy}
For any $n\geq0$, the functor $\mathcal{P}_n(-)_\mathrm{LP}$ preserves homotopy, hence homotopy equivalence.
\end{lemma}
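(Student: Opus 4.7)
The plan is to write down the evident candidate homotopy and verify its continuity via an explicit L\'evy--Prokhorov estimate. Given metric spaces $X, Y$, homotopic maps $f, g \colon X \to Y$, and a homotopy $H \colon X \times I \to Y$ between them, I would consider the set-theoretic map
\[
K \colon \mathcal{P}_n(X)_{\mathrm{LP}} \times I \longrightarrow \mathcal{P}_n(Y)_{\mathrm{LP}}, \qquad (\mu, t) \longmapsto (H(-,t))_{\ast}\, \mu,
\]
which by construction restricts to $\mathcal{P}_n(f)_{\mathrm{LP}}$ at $t = 0$ and to $\mathcal{P}_n(g)_{\mathrm{LP}}$ at $t = 1$. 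The only content of the lemma is therefore the continuity of $K$. To unwind it, I would factor $K$ as
\[
\mathcal{P}_n(X)_{\mathrm{LP}} \times I \xrightarrow{\,\phi\,} \mathcal{P}_n(X \times I)_{\mathrm{LP}} \xrightarrow{\mathcal{P}_n(H)_{\mathrm{LP}}} \mathcal{P}_n(Y)_{\mathrm{LP}},
\]
where $\phi(\mu, t) = \mu \otimes \delta_t$ and $X \times I$ carries the max product metric. The second arrow is continuous by functoriality of $\mathcal{P}_n(-)_{\mathrm{LP}}$, so everything reduces to continuity of $\phi$.

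For $\phi$, I would establish the Lipschitz estimate
\[
d_{\mathrm{LP}}(\mu \otimes \delta_t,\, \mu' \otimes \delta_{t'}) \leq d_{\mathrm{LP}}(\mu, \mu') + |t - t'|
\]
by splitting at the intermediate measure $\mu' \otimes \delta_t$. Writing $A_s := \{x \in X : (x,s) \in A\}$ for the slice of a Borel set $A \subseteq X \times I$, one has $(\mu \otimes \delta_s)(A) = \mu(A_s)$. For the ``horizontal'' term, the inclusion $(A_t)^\epsilon \subseteq (A^\epsilon)_t$ (immediate from the max metric) transfers any $\epsilon$-L\'evy--Prokhorov comparison of $\mu$ with $\mu'$ on $X$ directly into one of $\mu \otimes \delta_t$ with $\mu' \otimes \delta_t$ on $X \times I$. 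For the ``vertical'' term, the inclusion $A_t \subseteq (A^\epsilon)_{t'}$, valid whenever $|t - t'| < \epsilon$, yields $d_{\mathrm{LP}}(\mu' \otimes \delta_t, \mu' \otimes \delta_{t'}) \leq |t - t'|$. The triangle inequality then delivers the stated bound, and $\phi$ (hence $K$) is continuous.

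Preservation of homotopy equivalence is formal from preservation of homotopy. I do not foresee any genuine obstacle here: the argument is a careful accounting of the slicewise L\'evy--Prokhorov geometry on $X \times I$, amounting to the observation that pushforward under an arbitrary homotopy is Lipschitz. The point is to work entirely inside the LP metric, thereby sidestepping any comparison between the quotient and LP topologies and, in particular, the compactness hypothesis of Lemma \ref{lem:identity}.
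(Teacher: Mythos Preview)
Your argument is correct and structurally identical to the paper's: both factor the candidate homotopy as
\[
\mathcal{P}_n(X)_{\mathrm{LP}}\times I \xrightarrow{\ \phi\ } \mathcal{P}_n(X\times I)_{\mathrm{LP}} \xrightarrow{\ \mathcal{P}_n(H)_{\mathrm{LP}}\ } \mathcal{P}_n(Y)_{\mathrm{LP}},
\]
reducing everything to continuity of $\phi(\mu,t)=\mu\otimes\delta_t$. The only difference lies in how that continuity is verified. The paper checks it against the explicit topological basis for $\mathcal{P}_n(-)_{\mathrm{LP}}$ given in \cite[\S3.1]{DranishnikovJauhari:DTCLSC} (and in fact does so for general metric $Y$, not just $I$). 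You instead prove the Lipschitz estimate $d_{\mathrm{LP}}(\mu\otimes\delta_t,\mu'\otimes\delta_{t'})\leq d_{\mathrm{LP}}(\mu,\mu')+|t-t'|$ directly from the definition of the L\'evy--Prokhorov metric via the slice inclusions $(A_t)^\epsilon\subseteq (A^\epsilon)_t$ and $A_t\subseteq (A^\epsilon)_{t'}$ for $|t-t'|<\epsilon$. Your route is slightly more self-contained (no external basis description needed) and yields a quantitative bound; the paper's is shorter and more general in the second factor. Neither approach needs Lemma~\ref{lem:identity} or any compactness, as you note.
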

\begin{proof}
Using the topological basis given in \cite[\S3.1]{DranishnikovJauhari:DTCLSC}, it is easy to check that, for any metric space $Y$, the assignment 
\[\left(\sum_{i=1}^mt_i\delta_{x_i}, y\right)\mapsto \sum_{i=1}^mt_i\delta_{(x_i,y)} \] defines a continuous map $\mathcal{P}_n(X)_\mathrm{LP}\times Y\to \mathcal{P}_n(X\times Y)_\mathrm{LP}$. The claim follows easily after taking $Y=[0,1]$.
\end{proof}

\begin{proof}[Proof of Theorem \ref{thm:comparison}]
It suffices to show that the distributional category of $BG$ is the distributional sectional category of the map $\pi:EG\to BG$; indeed, the corresponding analog statement is true, as shown in the course of proving Proposition \ref{prop:hfp}, and $\pi$ is proper, so Proposition \ref{prop:comparison} applies. Considering the diagram
\[\xymatrix{
G\ar[d]\ar[r]&(BG,x_0)^{([0,1],\{0,1\})}\ar[d]\\
EG\ar[d]_-\pi\ar[r]&(BG,x_0)^{([0,1],\{0\})}\ar[d]\\
BG\ar@{=}[r]&BG
}\] from the proof of Proposition \ref{prop:hfp}, the claim follows by noting that the construction $f\mapsto \mathcal{P}_n(f)_{\mathrm{LP}}$ preserves Hurewicz fibrations by \cite[Prop. 5.1]{DranishnikovJauhari:DTCLSC} and that the construction $X\mapsto \mathcal{P}(X)_\mathrm{LP}$ preserves homotopy equivalence by Lemma \ref{lem:lp homotopy}.
\end{proof}

\begin{remark}
An alternative argument establishing that the distributional sectional category is a homotopy invariant of fibrations is given in \cite[Prop. 5.3]{Jauhari:OSVDTC}.
\end{remark}

\bibliographystyle{amsalpha}
\bibliography{references}

\end{document}